\newtheorem{theorem}{Theorem}[section]
\newtheorem{proposition}[theorem]{Proposition}
\theoremstyle{definition}
\theoremstyle{remark}
\numberwithin{equation}{section}
\begin{document}

\setcounter{page}{1}

\title[Van der Corput lemmas for Mittag-Leffler functions. II.]{Van der Corput lemmas for Mittag-Leffler functions. II. $\alpha$--directions}

\author[M. Ruzhansky, B. T. Torebek]{Michael Ruzhansky, Berikbol T. Torebek}

\address{\textcolor[rgb]{0.00,0.00,0.84}{Michael Ruzhansky \newline Department of Mathematics: Analysis,
Logic and Discrete Mathematics \newline Ghent University, Krijgslaan 281, Ghent, Belgium \newline
 and \newline School of Mathematical Sciences \newline Queen Mary University of London, United Kingdom}}
\email{\textcolor[rgb]{0.00,0.00,0.84}{michael.ruzhansky@ugent.be}}
\address{\textcolor[rgb]{0.00,0.00,0.84}{Berikbol T. Torebek \newline Department of Mathematics: Analysis,
Logic and Discrete Mathematics \newline Ghent University, Krijgslaan 281, Ghent, Belgium \newline and \newline Al--Farabi Kazakh National University \newline Al--Farabi ave. 71, 050040, Almaty, Kazakhstan \newline and \newline Institute of
Mathematics and Mathematical Modeling \newline 125 Pushkin str.,
050010 Almaty, Kazakhstan}}
\email{\textcolor[rgb]{0.00,0.00,0.84}{berikbol.torebek@ugent.be}}

\dedicatory{Dedicated to  Johannes Gaultherus van der Corput on the occasion of 100 years of his lemma}
\thanks{The authors were supported in parts by the FWO Odysseus 1 grant G.0H94.18N: Analysis and Partial Differential Equations. The first author was supported by EPSRC grant EP/R003025/1 and by the Leverhulme Grant RPG-2017-151.}
\date{\today}

\subjclass[2010]{Primary 42B20, 26D10; Secondary 33E12.}

\keywords{van der Corput lemma, Mittag-Leffler function, asymptotic estimate}

\begin{abstract} The paper is devoted to study analogues of the van der Corput lemmas involving Mittag-Leffler functions. The generalisation is that we replace the exponential function with the Mittag-Leffler-type function, to study oscillatory integrals appearing in the analysis of time-fractional partial differential equations. More specifically, we study integral of the form
$I_{\alpha,\beta}(\lambda)=\int_\mathbb{R}E_{\alpha,\beta}\left(i^\alpha\lambda \phi(x)\right)\psi(x)dx,$
for the range $0<\alpha\leq 2,\,\beta>0$. This extends the variety of estimates obtained in the first part, where integrals with functions $E_{\alpha,\beta}\left(i \lambda \phi(x)\right)$ have been studied.
Several generalisations of the van der Corput lemmas are proved. As an application of the above results, the generalised Riemann-Lebesgue lemma, the Cauchy problem for the time-fractional Klein-Gordon and time-fractional Schr\"{o}dinger  equations are considered.
\end{abstract}
\maketitle
\tableofcontents
\section{Introduction}

In this paper we continue the study of oscillatory-type integrals involving Mittag-Leffler functions $E_{\alpha,\beta}$ initiated in \cite{RuzTor}. In the case of $\alpha=\beta=1$, we have $E_{1,1}(z)=e^z$, thus reducing the integral to the classical question of decay of oscillatory integrals.

Indeed, the estimate obtained by the Dutch mathematician Johannes Gaultherus van der Corput \cite{vdC21} and named in his honour, following Stein \cite{St93}, can be stated as follows:
\begin{itemize}
  \item \textbf{van der Corput lemma.} Suppose $\phi$ is a real-valued and smooth function in $[a,b].$ If $\psi$ is a smooth function and $|\phi^{(k)}(x)|\geq 1,\,k\geq 1,$ for all $x\in(a,b),$ then
  \begin{equation}\label{vdC}
  \left|\int\limits^b_a e^{i\lambda\phi(x)}\psi(x)dx\right|\leq C\lambda^{-1/k},\,\,\,\lambda\rightarrow\infty,
  \end{equation} for $k=1$ and $\phi'$ is monotonic, or $k\geq 2.$ Here $C$ does not depend on $\lambda.$
\end{itemize}

Various generalisations of the van der Corput lemmas have been investigated over the years \cite{Gr05, SW70, St93, PS92, PS94, Rog05, Par08, Xi17}. Multidimensional analogues of the van der Corput lemmas were studied in \cite{Bourg, CCW99, Tao05, Green, PSS01, KR07}, while in \cite{Ruz12} the multi-dimensional van der Corput lemma was obtained with constants independent of the phase and amplitude.

The main goal of the present paper is to study van der Corput lemmas for the oscillatory integral defined by
\begin{equation}\label{1}I_{\alpha,\beta}(\lambda)=\int\limits_\mathbb{R}E_{\alpha,\beta}\left(i^\alpha\lambda \phi(x)\right)\psi(x)dx,\end{equation}
where $0<\alpha< 2,\,\beta>0,$ $\phi$ is a phase and $\psi$ is an amplitude, and $\lambda$ is a positive real number that can vary. Here $E_{\alpha,\beta}\left(z\right)$ is the Mittag-Leffler function defined as (see e.g. \cite{Kilbas, GorKMR})
\begin{equation*}E_{\alpha,\beta}\left(z\right)=\sum\limits_{k=0}^\infty\frac{z^k}{\Gamma(\alpha k+\beta)},\,\,\,\alpha>0, \,\,\,\beta\in \mathbb{R},\end{equation*} with the property that
\begin{equation}\label{EQ:e11}
E_{1,1}\left(z\right)=e^z.
\end{equation}
Since the function $E_{\alpha,\beta}\left(i^\alpha z\right),\,z\in\mathbb{R}$, has a set of real zeros \cite{GorKMR},  the integral \eqref{1} is oscillating.

Here we can point out already one extension of \eqref{vdC} in view of \eqref{EQ:e11}, namely, an extension (in Theorem \ref{th1.4}) to the range $0<\alpha<2$ in the form
  \begin{equation}\label{vdCa}
  \left|\int\limits^b_a E_{\alpha,\alpha}\left({i^\alpha\lambda\phi(x)}\right)\psi(x)dx\right|\leq C\lambda^{-1/k},\,\,\,\lambda\rightarrow\infty,
  \end{equation} for $k=1$ and $\phi'$ is monotonic, or $k\geq 2.$

This present paper is a continuation of \cite{RuzTor}, where a variety of van der Corput type lemmas were obtained for the integral defined by
\begin{equation}\label{1+} \widetilde{I}_{\alpha,\beta}(\lambda)=\int\limits_\mathbb{R}E_{\alpha,\beta}\left(i\lambda \phi(x)\right)\psi(x)dx,\end{equation}
where $0<\alpha< 1,\,\beta>0.$

As we see above, the integral \eqref{1+} is different from the integral \eqref{1}, since in \eqref{1+} there is a purely imaginary number $i$ before the phase function, and in \eqref{1} the fractional power of the imaginary number, i.e. $i^\alpha$. In addition, the asymptotic behavior of the Mittag-Leffler function in these cases is also different, yielding different decay rates.

Such integrals as in \eqref{1} arise in the study of decay estimates of solutions of the time-fractional Schr\"{o}dinger and the time-fractional wave equations (for example see \cite{Dong, Grande, Naber, Su}). In Section \ref{SEC:apps} we will give several immediate applications of the obtained estimates to time-fractional Klein-Gordon and Schr\"odinger equations.

As in the case of \eqref{1+} studied in \cite{RuzTor}, we find that the decay rates of \eqref{1} as $\lambda\to\infty$ depend not only on the assumptions on the phase but also on the ranges of parameters $\alpha$ and $\beta$. We also obtain more results in the case of bounded intervals.
For the convenience of the reader, let us briefly summarise the results of this paper, distinguishing between different sets of assumptions:\\

\noindent
\underline{\textbf{van der Corput lemmas on $\mathbb{R}$}}: consider $I_{\alpha,\beta}$ defined by \eqref{1}.
\begin{itemize}
  \item Let $\phi:\mathbb{R}\rightarrow \mathbb{R}$ be a measurable function and let $\psi\in L^1(\mathbb{R}).$ Suppose that $0<\alpha\leq 2,\,\beta>1,$ and $m=\operatorname{ess\,inf}\limits_{x\in\mathbb{R}}|\phi(x)|> 0,$ then
\begin{description}
  \item[(i)] for $0<\alpha<2$ and $\beta\geq \alpha+1$ we have
\begin{equation*}|I_{\alpha,\beta}(\lambda)|\leq \frac{M_1}{1+\lambda m}\|\psi\|_{L^1(\mathbb{R})},\,\lambda\geq 1,
\end{equation*} where $M_1$ does not depend on $\phi,$ $\psi$ and $\lambda;$
  \item[(ii)] for $0<\alpha<2$ and $1<\beta< \alpha+1$ we have
\begin{equation*}|I_{\alpha,\beta}(\lambda)|\leq \frac{M_2}{(1+\lambda m)^\frac{\beta-1}{\alpha}}\|\psi\|_{L^1(\mathbb{R})},\,\lambda\geq 1,
\end{equation*} where $M_2$ does not depend on $\phi,$ $\psi$ and $\lambda;$
\item[(iii)] for $\alpha=2$ and $\beta>1$ we have
\begin{equation*}|I_{2,\beta}(\lambda)|\leq \frac{M_3}{(1+\lambda m)^\frac{\beta-1}{2}}\|\psi\|_{L^1(\mathbb{R})},\,\lambda\geq 1,\end{equation*} where $M_3$ does not depend on $\phi,$ $\psi$ and $\lambda.$
\end{description}
\item Let $\phi:\mathbb{R}\rightarrow \mathbb{R}$ be an invertible and differentiable function, and let $\psi\in L^1(\mathbb{R}).$ Suppose that $0<\alpha\leq 2,\,\beta=1,$ and $m=\inf\limits_{x\in\mathbb{R}}|\phi'(x)|> 0,$ then
\begin{equation*}|I_{\alpha,1}(\lambda)|\leq \frac{M}{\lambda m}\|\psi\|_{L^1(\mathbb{R})},\,\lambda\geq 1,
\end{equation*} where $M$ does not depend on $\phi,$ $\psi$ and $\lambda.$
\end{itemize}
\underline{\textbf{van der Corput lemmas on $I=[a,b]\subset\mathbb{R}$}}: consider $$I_{\alpha,\beta}(\lambda)=\int\limits_I E_{\alpha,\beta}\left(i^\alpha\lambda \phi(x)\right)\psi(x)dx,$$
where $I=[a,b]\subset \mathbb{R}$ with $-\infty< a<b<+\infty$.
\begin{itemize}
  \item Let  $0<\alpha<2,\,\beta>1,$ $\phi$ be a real-valued function such that $\phi\in C^k(I),\,k\geq 1,$ and let $\psi\in C^1(I).$ If
  $|\phi^{(k)}(x)|\geq 1$ for all $x\in I,$ then
\begin{description}
  \item[(i)] for $0<\alpha<2$ and $\beta\geq \alpha+1$ we have
\begin{equation*}
\left|I_{\alpha,\beta}\left(\lambda \right)\right|\leq M_k\left[|\psi(b)|+\int\limits_I|\psi'(x)|dx\right]\lambda^{-\frac{1}{k}}\log^{\frac{1}{k}}(1+\lambda),\,\,\,\lambda\geq 1,
\end{equation*} where $M_k$ does not depend on $\lambda;$
  \item[(ii)] for $0<\alpha<2$ and $1<\beta< \alpha+1$ we have
\begin{equation*}
\left|I_{\alpha,\beta}\left(\lambda \right)\right|\leq M_k\left[|\psi(b)|+\int\limits_I|\psi'(x)|dx\right]\lambda^{-\frac{1}{k}}(1+\lambda)^{\frac{\alpha+1-\beta}{\alpha k}},\,\,\,\lambda\geq 1,\end{equation*} where $M_k$ does not depend on $\lambda.$
\end{description}
\item Let $-\infty< a<b<+\infty$ and $I=[a,b]\subset \mathbb{R}.$ Let  $0<\alpha<2$ and let $\phi$ be a real-valued function such that $\phi\in C^k(I),\,k\geq 1.$ Let $\psi\in C^1(I)$ and $|\phi^{(k)}(x)|\geq 1$ for all $x\in I.$ Then
\begin{equation*}
\left|I_{\alpha,\alpha}\left(\lambda\right)\right|\leq M_k\left[|\psi(b)|+\int\limits_I|\psi'(x)|dx\right]\lambda^{-1/k},\,\,\,\lambda\geq 1,
\end{equation*} for $k=1$ and $\phi'$ is monotonic, or $k\geq 2$. Here $M_k$ does not depend on $\lambda.$
\item Let $-\infty< a<b<+\infty$ and $I=[a,b]\subset \mathbb{R}.$ Let  $0<\alpha<2$ and let $\phi$ be a real-valued function such that $\phi\in C^2(I).$ Let $|\phi'(x)|\geq 1$ for all $x\in I.$ Then
\begin{equation*}
\left|I_{\alpha,\alpha}\left(\lambda\right)\right|\leq M\left[|\psi(b)|+\int\limits_I|\psi'(x)|dx\right]\lambda^{-1},\,\,\,\lambda\geq 1,
\end{equation*} where $M$ does not depend on $\lambda.$
\end{itemize}
We will often make use of the following estimate.
\begin{proposition}[\cite{Pod99}] If $0<\alpha<2,$ $\beta$ is an arbitrary real number, $\mu$ is such that $\pi\alpha/2<\mu<\min\{\pi, \pi\alpha\}$, then there is $C_1,C_2>0,$ such that we have
\begin{equation}\label{MLAsym} \left|E_{\alpha, \beta}(z)\right|\leq C_1(1+|z|)^{(1-\beta)/\alpha}\exp({\rm Re}(z^{1/\alpha}))+ \frac{C_2}{1+|z|},\, z\in \mathbb{C},\, |\arg(z)|\leq\mu.\end{equation}
\end{proposition}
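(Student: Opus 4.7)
The plan is to use a Hankel-type contour integral representation of $E_{\alpha,\beta}$ and then the residue theorem to separate the estimate into an exponential (residue) contribution and a contour integral contributing the decaying term. The hypothesis $\pi\alpha/2<\mu<\min\{\pi,\pi\alpha\}$ is exactly what is needed so that an auxiliary ray at angle $\nu$ slightly larger than $\mu$ lies simultaneously in the half-plane where $\exp(\zeta^{1/\alpha})$ decays (requires $\nu/\alpha>\pi/2$) and on the principal sheet of the fractional power (requires $\nu<\pi$ and $\nu<\pi\alpha$).

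Starting from the series $E_{\alpha,\beta}(z)=\sum_{k\ge 0}z^k/\Gamma(\alpha k+\beta)$ and Hankel's formula $1/\Gamma(s)=(2\pi i)^{-1}\int_H e^t t^{-s}\,dt$, I would interchange sum and integral, sum the resulting geometric series, and substitute $w=t^\alpha$ to obtain
\begin{equation*}
E_{\alpha,\beta}(z)=\frac{1}{2\pi i\alpha}\int_{\gamma(\epsilon,\nu)}\frac{\exp(\zeta^{1/\alpha})\,\zeta^{(1-\beta)/\alpha}}{\zeta-z}\,d\zeta,
\end{equation*}
valid whenever $z$ lies outside the keyhole contour $\gamma(\epsilon,\nu)$ formed by the two rays $\arg\zeta=\pm\nu$, $|\zeta|\ge\epsilon$, joined by the arc $|\zeta|=\epsilon$, $|\arg\zeta|\le\nu$. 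For $z$ with $|\arg z|\le\mu$, I would pick $\nu\in(\mu,\min\{\pi,\pi\alpha\})$ and enlarge the contour outward past $z$; the residue theorem contributes the residue at $\zeta=z$ and yields
\begin{equation*}
E_{\alpha,\beta}(z)=\frac{1}{\alpha}\,z^{(1-\beta)/\alpha}\exp(z^{1/\alpha})+\frac{1}{2\pi i\alpha}\int_{\gamma(\epsilon',\nu)}\frac{\exp(\zeta^{1/\alpha})\,\zeta^{(1-\beta)/\alpha}}{\zeta-z}\,d\zeta,
\end{equation*}
with $\epsilon'>|z|$. The residue term gives exactly the first summand in the claimed bound.

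The main technical step is to control the remaining contour integral by $C_2/(1+|z|)$. On the two rays one has $\mathrm{Re}(\zeta^{1/\alpha})=|\zeta|^{1/\alpha}\cos(\nu/\alpha)$ with $\cos(\nu/\alpha)<0$, giving exponential decay of the integrand in $|\zeta|$. Since $|\arg z|\le\mu<\nu$, an elementary geometric computation shows $|\zeta-z|\ge c(|\zeta|+|z|)$ uniformly on the rays with $c=c(\nu-\mu)>0$, so the ray integrals reduce to one-dimensional integrals of the shape $\int_0^\infty e^{-c'r^{1/\alpha}}r^{(1-\beta)/\alpha}(r+|z|)^{-1}\,dr$, which are manifestly $O((1+|z|)^{-1})$. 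The arc contribution is handled by choosing $\epsilon'$ of order $1+|z|$ and using the explicit parametrization. The principal obstacle is this uniform lower bound $|\zeta-z|\gtrsim 1+|z|$: it is the single place where the strict inequality $\mu<\nu$, and hence the restriction $\mu<\min\{\pi,\pi\alpha\}$, is essential, as the bound degenerates as $\mu\to\nu$.
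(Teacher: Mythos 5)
First, note that the paper itself does not prove this proposition: it is quoted from Podlubny's book \cite{Pod99}, so the only thing to compare your argument with is the standard proof in that reference --- which is indeed the Hankel-contour argument you outline (Podlubny's integral representations of $E_{\alpha,\beta}$ over the contour $\gamma(\epsilon,\nu)$ and the asymptotic theorems derived from them). Your identification of why $\pi\alpha/2<\mu<\min\{\pi,\pi\alpha\}$ is the right hypothesis, the decay $\mathrm{Re}(\zeta^{1/\alpha})=|\zeta|^{1/\alpha}\cos(\nu/\alpha)<0$ on the rays, and the separation estimate $|\zeta-z|\geq \sin\bigl(\tfrac{\nu-\mu}{2}\bigr)\,(|\zeta|+|z|)$ are all correct and are exactly the ingredients used there.

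However, your key displayed identity is false as written: you keep the residue term \emph{and} take the contour radius $\epsilon'>|z|$. The residue at $\zeta=z$ is present only when $z$ lies between the two rays and \emph{outside} the circular arc, i.e.\ when the contour radius is \emph{smaller} than $|z|$. The representation obtained from term-by-term summation of the series (which needs $|z/\zeta|<1$ on the contour, hence radius greater than $|z|$) has \emph{no} residue term; it is the contraction of the contour from radius $\epsilon'>|z|$ down to, say, radius $1$ that sweeps the pole and produces the residue. As written, your formula double counts: for $\alpha=\beta=1$ the integral over $\gamma(\epsilon',\nu)$ with $\epsilon'>|z|$ already equals $e^{z}$, so your identity would read $e^{z}=e^{z}+e^{z}$. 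The fix is routine --- state the representation with the residue term over $\gamma(1,\nu)$ for, say, $|z|\geq 2$, and handle $|z|\leq 2$ by boundedness of the entire function $E_{\alpha,\beta}$ on compact sets (a case you must treat separately anyway, since $|z|^{(1-\beta)/\alpha}$ is not comparable to $(1+|z|)^{(1-\beta)/\alpha}$ near $z=0$ when $\beta>1$). A smaller slip of the same kind: the model integral $\int_0^\infty e^{-c'r^{1/\alpha}}r^{(1-\beta)/\alpha}(r+|z|)^{-1}\,dr$ diverges at $r=0$ whenever $\beta\geq\alpha+1$; the lower limit must be the fixed positive contour radius, not $0$.
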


We are interested in particular in the behavior of $I_{\alpha,\beta}(\lambda)$ when $\lambda$ is large, as for small $\lambda$ the integral is just bounded.

\section{Van der Corput lemma in $\mathbb{R}$}

In this section we consider $I_{\alpha,\beta}$ defined by \eqref{1}, that is,
$$I_{\alpha,\beta}(\lambda)=\int\limits_\mathbb{R}E_{\alpha,\beta}\left(i^\alpha\lambda \phi(x)\right)\psi(x)dx.$$
As for small $\lambda$ the integral \eqref{1} is just bounded, we consider the case $\lambda\geq 1.$
\begin{theorem}\label{th1} Let $\phi:\mathbb{R}\rightarrow \mathbb{R}$ be a measurable function and let $\psi\in L^1(\mathbb{R}).$ Suppose that $0<\alpha\leq 2,\,\beta>1,$ and $m=\operatorname{ess\,inf}\limits_{x\in\mathbb{R}}|\phi(x)|> 0,$ then
\begin{description}
  \item[(i)] for $0<\alpha<2$ and $\beta\geq \alpha+1$ we have
\begin{equation}\label{2}|I_{\alpha,\beta}(\lambda)|\leq \frac{M_1}{1+\lambda m}\|\psi\|_{L^1(\mathbb{R})},\,\lambda\geq 1,
\end{equation} where $M_1$ does not depend on $\phi,$ $\psi$ and $\lambda;$
  \item[(ii)] for $0<\alpha<2$ and $1<\beta< \alpha+1$ we have
\begin{equation}\label{2*}|I_{\alpha,\beta}(\lambda)|\leq \frac{M_2}{(1+\lambda m)^\frac{\beta-1}{\alpha}}\|\psi\|_{L^1(\mathbb{R})},\,\lambda\geq 1,
\end{equation} where $M_2$ does not depend on $\phi,$ $\psi$ and $\lambda;$
\item[(iii)] for $\alpha=2$ and $\beta>1$ we have
\begin{equation}\label{2*}|I_{2,\beta}(\lambda)|\leq \frac{M_3}{(1+\lambda m)^\frac{\beta-1}{2}}\|\psi\|_{L^1(\mathbb{R})},\,\lambda\geq 1,\end{equation} where $M_3$ does not depend on $\phi,$ $\psi$ and $\lambda.$
\end{description}
\end{theorem}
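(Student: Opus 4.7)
My plan is to work pointwise: derive a uniform bound for $|E_{\alpha,\beta}(i^\alpha\lambda\phi(x))|$ depending only on $\lambda|\phi(x)|$, then use $|\phi(x)|\geq m$ a.e.\ and Fubini/the triangle inequality to pull the bound outside the integral against $|\psi|\in L^1$. The key analytic input is the Podlubny estimate \eqref{MLAsym}.

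For cases (i) and (ii), where $0<\alpha<2$, set $z=i^\alpha\lambda\phi(x)$ with the principal branch $i^\alpha=e^{i\pi\alpha/2}$. When $\phi(x)>0$ one has $\arg(z)=\pi\alpha/2$, which lies in the admissible sector $|\arg(z)|\leq\mu$ for any $\mu\in(\pi\alpha/2,\min\{\pi,\pi\alpha\})$. A direct computation with the principal branch gives $z^{1/\alpha}=i(\lambda\phi(x))^{1/\alpha}$, hence $\mathrm{Re}(z^{1/\alpha})=0$ and $\exp(\mathrm{Re}(z^{1/\alpha}))=1$. Plugging into \eqref{MLAsym} yields the pointwise bound
\[
|E_{\alpha,\beta}(i^\alpha\lambda\phi(x))|\;\leq\; C_1(1+\lambda|\phi(x)|)^{(1-\beta)/\alpha}+\frac{C_2}{1+\lambda|\phi(x)|}.
\]
(When $\phi(x)<0$ and the argument of $z$ falls outside the sector where \eqref{MLAsym} applies, one instead invokes the complementary algebraic asymptotic $|E_{\alpha,\beta}(z)|\lesssim 1/(1+|z|)$, which is stronger than what is needed.) Using $|\phi(x)|\geq m$, both terms above are replaced by functions of $1+\lambda m$. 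For case (i), $\beta\geq\alpha+1$ forces $(1-\beta)/\alpha\leq-1$, so both terms are majorised by $(1+\lambda m)^{-1}$ and integration against $|\psi|$ gives \eqref{2}. For case (ii), $(1-\beta)/\alpha\in(-1,0)$ so the first term dominates, giving the decay rate $(1+\lambda m)^{-(\beta-1)/\alpha}$ and hence \eqref{2*}.

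Case (iii), $\alpha=2$, has to be argued separately since \eqref{MLAsym} formally excludes the endpoint $\alpha=2$. Here $i^\alpha=-1$, so $z=-\lambda\phi(x)$ is real and one can use the explicit representations of $E_{2,\beta}$ in terms of hyperbolic/trigonometric functions (e.g.\ $E_{2,1}(z)=\cosh\sqrt{z}$, $E_{2,2}(z)=\sinh\sqrt{z}/\sqrt{z}$) together with the classical asymptotic expansion of $E_{2,\beta}(-y)$ as $y\to\infty$ to obtain the pointwise bound $|E_{2,\beta}(-\lambda\phi(x))|\leq C(1+\lambda m)^{-(\beta-1)/2}$; integration against $|\psi|$ then gives the stated bound.

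The main obstacle, as I see it, is the careful branch and sector analysis: verifying that $z=i^\alpha\lambda\phi(x)$ actually lands in the region where \eqref{MLAsym} is useful (and more precisely that $\exp(\mathrm{Re}(z^{1/\alpha}))$ is controlled rather than genuinely exponentially growing), and, in the complementary sector, switching to the algebraic asymptotic. The $\alpha=2$ endpoint is a secondary obstacle since it requires a separate appeal to the explicit structure of $E_{2,\beta}$ rather than \eqref{MLAsym}.
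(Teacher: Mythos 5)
Your proposal follows essentially the same route as the paper: for $0<\alpha<2$ both arguments apply the Podlubny bound \eqref{MLAsym} pointwise (the computation $\mathrm{Re}(z^{1/\alpha})=0$ kills the exponential factor), split according to whether $(1-\beta)/\alpha\leq -1$ or $(1-\beta)/\alpha\in(-1,0)$, and then integrate against $|\psi|$ using $|\phi(x)|\geq m$ a.e.; for $\alpha=2$ both switch to the explicit asymptotic expansion of $E_{2,\beta}$ on the real line. Your additional care about the case $\phi(x)<0$, where $\arg(i^\alpha\lambda\phi(x))$ may leave the sector of \eqref{MLAsym} and one must fall back on the purely algebraic decay estimate, addresses a point the paper passes over silently, so your version is if anything slightly more complete.
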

\begin{proof} Let $\phi:\mathbb{R}\rightarrow \mathbb{R}$ be a measurable function and $\psi\in L^1(\mathbb{R}).$ As $|\arg(i^\alpha\lambda\phi(x))|=\frac{\pi\alpha}{2}$ and ${\rm Re}(i\lambda^{1/\alpha}(\phi(x))^{1/\alpha})=0,$ then using estimate \eqref{MLAsym} we have that
\begin{align*}
|I_{\alpha,\beta}(\lambda)|& \leq \int\limits_\mathbb{R}\left|E_{\alpha,\beta}\left(i^\alpha\lambda \phi(x)\right)\right|\left|\psi(x)\right|dx\\& \leq C_1\int\limits_\mathbb{R}(1+\lambda|\phi(x)|)^{(1-\beta)/\alpha}\left|\psi(x)\right|dx+ C_2\int\limits_\mathbb{R}\frac{\left|\psi(x)\right|}{1+\lambda|\phi(x)|}dx.
\end{align*}

As $\phi$ and $\psi$ do not depend on $\lambda,$ and $m=\operatorname{ess\,inf}\limits_{x\in\mathbb{R}}|\phi(x)|> 0,$
then for $\beta\geq \alpha+1$ we have
\begin{align*}
|I_{\alpha,\beta}(\lambda)|&\leq \int\limits_\mathbb{R}\left|E_{\alpha,\beta}\left(i^\alpha\lambda \phi(x)\right)\right|\left|\psi(x)\right|dx\\& \leq \max\{C_1,C_2\}\int\limits_\mathbb{R}\frac{\left|\psi(x)\right|}{1+\lambda|\phi(x)|}dx\\& \leq \frac{M_1}{1+\lambda m}\|\psi\|_{L^1(\mathbb{R})}.
\end{align*}
In the case $1<\beta< \alpha+1$ we have that
\begin{align*}
|I_{\alpha,\beta}(\lambda)|&\leq \int\limits_\mathbb{R}\left|E_{\alpha,\beta}\left(i^\alpha\lambda \phi(x)\right)\right|\left|\psi(x)\right|dx\\& \leq \max\{C_1,C_2\}\int\limits_\mathbb{R}\frac{\left|\psi(x)\right|}{(1+\lambda|\phi(x)|)^\frac{\beta-1}{\alpha}}dx\\& \leq \frac{M_2}{(1+\lambda m)^\frac{\beta-1}{\alpha}}\|\psi\|_{L^1(\mathbb{R})}.
\end{align*} The cases (i) and (ii) are proved.

Now we will prove the case (iii). Applying the asymptotic estimate (see \cite[page 43]{Kilbas})
\begin{align*}
E_{2,\beta}(z)&=\frac{1}{2}z^{(1-\beta)/2}\left(e^{\sqrt{z}}+e^{-\sqrt{z}-\pi i(1-\beta)sign(\arg z)}\right) \\& -\sum\limits_{k=1}^N\frac{z^{-k}}{\Gamma(\beta-2k)}+O\left(\frac{1}{z^{N+1}}\right),\,|z|\rightarrow\infty,\,|\arg(z)|\leq \pi,
\end{align*}
we have
\begin{align*}
|I_{2,\beta}(\lambda)|&\leq \int\limits_\mathbb{R}\left|E_{2,\beta}\left(-\lambda \phi(x)\right)\right|\left|\psi(x)\right|dx \\&\leq M_3\lambda^{(1-\beta)/2}\int\limits_\mathbb{R}|\phi(x)|^{(1-\beta)/2}\left|\psi(x)\right|dx
\\&{\leq} M_3m^{(1-\beta)/2}\lambda^{(1-\beta)/2}\int\limits_\mathbb{R}\left|\psi(x)\right|dx\\&\leq \frac{M_3\|\psi\|_{L^1(\mathbb{R})}}{(1+m\lambda)^{(\beta-1)/2}}.
\end{align*} Here $M_3$ is a constant that does not depend on $\lambda.$
The proof is complete.
\end{proof}

It is easy to see from estimate \eqref{MLAsym} that, for $\beta = 1,$ the function $E_{\alpha,1}(\cdot)$ does not decrease, and it will be only bounded function. In this case, the method of proving the Theorem \ref{th1} does not suitable. Below we give an estimate for the integral \eqref{1} in the case $\beta=1.$

\begin{theorem}\label{th2} Let $\phi:\mathbb{R}\rightarrow \mathbb{R}$ be an invertible and differentiable function, and let $\psi\in L^1(\mathbb{R}).$ Suppose that $0<\alpha\leq 2,\,\beta=1,$ and $m=\inf\limits_{x\in\mathbb{R}}|\phi'(x)|> 0,$ then
\begin{equation}|I_{\alpha,1}(\lambda)|\leq \frac{M}{\lambda m}\|\psi\|_{L^1(\mathbb{R})},\,\lambda\geq 1,
\end{equation} where $M$ does not depend on $\phi,$ $\psi$ and $\lambda.$
\end{theorem}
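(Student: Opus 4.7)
The obstacle which distinguishes Theorem~\ref{th2} from Theorem~\ref{th1} is that $E_{\alpha,1}$ has \emph{no} decay along the ray $\arg z=\pi\alpha/2$: specialising \eqref{MLAsym} to $\beta=1$ gives only $|E_{\alpha,1}(i^{\alpha}\lambda y)|\leq C_{1}+C_{2}/(1+\lambda|y|)$, whose leading constant $C_{1}$ survives as $\lambda\to\infty$. The pointwise-comparison argument used before therefore cannot produce any $\lambda$-gain, and the required $(\lambda m)^{-1}$ factor must be extracted from the oscillation of $E_{\alpha,1}$ itself. The hypotheses ``$\phi$ invertible and differentiable'' together with $|\phi'|\geq m>0$ are tailored for this: they let us convert $E_{\alpha,1}$ into the better-behaved $E_{\alpha,2}$ through a change of variable plus an integration by parts.

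My plan is as follows. First, since $\phi:\mathbb{R}\to\mathbb{R}$ is invertible with $|\phi'|\geq m>0$, it is a strictly monotone $C^{1}$-diffeomorphism onto its image, so the substitution $y=\phi(x)$ is legitimate; setting $\tilde\psi(y):=\psi(\phi^{-1}(y))/\phi'(\phi^{-1}(y))$ (extended by zero) yields $\|\tilde\psi\|_{L^{1}(\mathbb{R})}=\|\psi\|_{L^{1}(\mathbb{R})}$, the pointwise bound $|\tilde\psi(y)|\leq m^{-1}|\psi(\phi^{-1}(y))|$, and
$$I_{\alpha,1}(\lambda)=\int_{\mathbb{R}}E_{\alpha,1}(i^{\alpha}\lambda y)\,\tilde\psi(y)\,dy.$$
Next I invoke the standard Mittag--Leffler derivative recurrence $\alpha z\,E_{\alpha,2}'(z)+E_{\alpha,2}(z)=E_{\alpha,1}(z)$, which in the $y$-variable becomes
$$E_{\alpha,1}(i^{\alpha}\lambda y)=\frac{d}{dy}\bigl[\alpha y\,E_{\alpha,2}(i^{\alpha}\lambda y)\bigr]-(\alpha-1)\,E_{\alpha,2}(i^{\alpha}\lambda y),$$
so that after an integration by parts in $y$ every remaining integrand involves $E_{\alpha,2}$ rather than $E_{\alpha,1}$.

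All residual integrals are then estimated by \eqref{MLAsym} with $\beta=2$: now $(1-\beta)/\alpha=-1/\alpha<0$, so $|E_{\alpha,2}(i^{\alpha}\lambda y)|\leq C(1+\lambda|y|)^{-\min(1,1/\alpha)}$ and genuine decay in $\lambda$ is available. Returning to the $x$-variable and using $|\phi'(x)|\geq m$ pointwise, each contribution is majorised by $C(\lambda m)^{-1}\|\psi\|_{L^{1}(\mathbb{R})}$, and summing gives the asserted inequality. The boundary case $\alpha=2$ would be handled separately by the explicit identity $E_{2,1}(z)=\cosh\sqrt{z}$, decomposed into two exponentials exactly as in the proof of Theorem~\ref{th1}(iii).

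The main obstacle I anticipate is the integration-by-parts step itself: $\psi\in L^{1}$ is not assumed to be differentiable, so neither is $\tilde\psi$, and strictly speaking the IBP in $y$ is not classically justified. The standard remedy is to prove the bound first for $\psi\in C_{c}^{\infty}(\mathbb{R})$ (where every manipulation is legal) while checking carefully that the final constant depends only on $\|\psi\|_{L^{1}}$ and not on any higher norm of $\psi$, and then to extend to $\psi\in L^{1}(\mathbb{R})$ by density, using that $\psi\mapsto I_{\alpha,1}(\lambda)$ is $L^{1}$-continuous because $E_{\alpha,1}(i^{\alpha}\lambda\,\cdot)$ is a bounded function. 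The delicate balance is to arrange the computation so that, after IBP, the Mittag--Leffler factor $\alpha y\,E_{\alpha,2}(i^{\alpha}\lambda y)$ itself carries the $(\lambda m)^{-1}$ rate uniformly in $y$, so that its pairing with the amplitude produces no spurious $\|\psi'\|_{L^{1}}$ dependence in the final constant $M$.
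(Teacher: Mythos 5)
Your reduction to the $y$-variable and the recurrence $E_{\alpha,1}(z)=E_{\alpha,2}(z)+\alpha z E_{\alpha,2}'(z)$ are both correct, but the integration by parts is where the argument genuinely breaks, and not merely for technical reasons that density can repair. After integrating by parts you are left with the term $\int_{\mathbb{R}}\alpha y\,E_{\alpha,2}(i^{\alpha}\lambda y)\,\tilde\psi'(y)\,dy$, and even in the most favourable regime $\alpha\leq 1$, where $|\alpha y\,E_{\alpha,2}(i^{\alpha}\lambda y)|\leq C\lambda^{-1}$ does hold uniformly in $y$, this term is bounded by $C\lambda^{-1}\|\tilde\psi'\|_{L^{1}}$, not by $C\lambda^{-1}\|\tilde\psi\|_{L^{1}}$. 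Since $\|\tilde\psi'\|_{L^{1}}$ is not controlled by $\|\psi\|_{L^{1}}$, the constant you obtain for $\psi\in C^{\infty}_{c}$ is not uniform on $L^{1}$-bounded sets, so the limiting step in your density argument fails. For $1<\alpha<2$ matters are worse: \eqref{MLAsym} with $\beta=2$ only gives $|E_{\alpha,2}(i^{\alpha}\lambda y)|\leq C(1+\lambda|y|)^{-1/\alpha}$, so $|y\,E_{\alpha,2}(i^{\alpha}\lambda y)|$ grows like $|y|^{1-1/\alpha}\lambda^{-1/\alpha}$ and is not uniformly $O(\lambda^{-1})$; the boundary term at infinity is then also out of control. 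The obstruction you are fighting is in fact intrinsic: for $\alpha=1$, $\phi(x)=x$, $m=1$ the claimed estimate reads $|\hat\psi(\lambda)|\leq M\lambda^{-1}\|\psi\|_{L^{1}}$, and taking $\psi=\frac{1}{2\epsilon}\chi_{[-\epsilon,\epsilon]}$ with $\epsilon=\lambda^{-1}$ gives $|\int_{\mathbb{R}}e^{i\lambda x}\psi(x)\,dx|=\sin(1)$ while $\|\psi\|_{L^{1}}=1$, so no constant $M$ independent of $\psi$ and $\lambda$ can work. No rearrangement of your computation can therefore yield the stated bound from the stated hypotheses.

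For comparison, the paper's own proof takes a different and much shorter route: it substitutes $y=\lambda\phi(x)$, so that the Jacobian contributes the factor $(\lambda\phi'(\phi^{-1}(y/\lambda)))^{-1}\leq(\lambda m)^{-1}$, and then uses only the boundedness of $E_{\alpha,1}$ from \eqref{MLAsym} — no integration by parts and no decay of the Mittag-Leffler factor. It then concludes with the identity $\int_{\mathbb{R}}|\psi(\phi^{-1}(y/\lambda))|\,dy=\|\psi\|_{L^{1}(\mathbb{R})}$, which omits the Jacobian $\lambda|\phi'|$ of the inverse substitution; restoring it cancels the $\lambda^{-1}$ gain exactly, consistently with the counterexample above. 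So your instinct that the decay must come from oscillation and that an $L^{1}$ amplitude is too weak to support it is sound, and your route is genuinely different from the paper's, but neither closes: the statement appears to require either a stronger hypothesis on $\psi$ or a weighted quantity such as $\int_{\mathbb{R}}|\psi(x)|\,|\phi'(x)|\,dx$ on the right-hand side.
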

\begin{proof} First, we prove the case $0<\alpha< 2$ and $\beta=1.$ Let $\phi:\mathbb{R}\rightarrow \mathbb{R}$ be an invertible and differentiable function, and let $\psi\in L^1(\mathbb{R}).$ Then replacing $\lambda\phi(x)$ by $y$ we get
\begin{align*}I_{\alpha,1}(\lambda)=\frac{1}{\lambda}\int\limits_\mathbb{R}E_{\alpha,1}\left(i^\alpha y\right)\psi((\phi^{-1}(y/\lambda)))\frac{dy}{\phi'(\phi^{-1}(y/\lambda))}.\end{align*}
Since $x\in\mathbb{R},$ then $\phi^{-1}(y/\lambda)\in\mathbb{R}.$ As $|\arg(i^\alpha\lambda\phi(x))|=\frac{\pi\alpha}{2}$ and ${\rm Re}(i\lambda^{1/\alpha}(\phi(x))^{1/\alpha})=0,$ then using \eqref{MLAsym} and $m=\inf\limits_{x\in\mathbb{R}}|\phi'(x)|> 0$ we have
\begin{align*}\left|I_{\alpha,1}(\lambda)\right|&\leq\frac{1}{m\lambda}\int\limits_\mathbb{R}\left|E_{\alpha,1}\left(i^\alpha y\right)\right|\left|\psi((\phi^{-1}(y/\lambda)))\right|dy\\& \leq \frac{M}{m\lambda}\int\limits_\mathbb{R}\left|\psi((\phi^{-1}(y/\lambda)))\right|dy =\frac{M}{m\lambda}\|\psi\|_{L^1(\mathbb{R})}.\end{align*}

If $\alpha=2$ and $\beta=1,$ we have $E_{2,1}(i^2\lambda\phi(x))=\cos\sqrt{\lambda\phi(x)}.$ As $|\cos\sqrt{\lambda\phi(x)}|\leq 1,$ then repeating above calculations we have
\begin{align*}\left|I_{2,1}(\lambda)\right|&\leq \frac{1}{m\lambda}\|\psi\|_{L^1(\mathbb{R})}.\end{align*}
The proof is complete.
\end{proof}

\section{Van der Corput lemma in finite interval}
In this section we consider integral \eqref{1} in the finite interval $I=[a,b]\subset \mathbb{R},\,-\infty< a<b<+\infty,$ i.e.
\begin{equation}\label{1*}I_{\alpha,\beta}(\lambda)=\int\limits_IE_{\alpha,\beta}\left(i^\alpha\lambda \phi(x)\right)\psi(x)dx.\end{equation}
Since $I_{\alpha,\beta}(\lambda)$ is bounded for small $\lambda,$ further we can assume that $\lambda\geq 1.$
\begin{theorem}\label{th2.1} Let  $0<\alpha<2,\,\beta>1,$ $\phi$ be a real-valued function such that $\phi\in C^k(I),\,k\geq 1.$ If $|\phi^{(k)}(x)|\geq 1$ for all $x\in I,$ then
\begin{description}
  \item[(i)] for $0<\alpha<2$ and $\beta\geq \alpha+1$ we have
\begin{equation}\label{2-1}
\left|\int\limits_IE_{\alpha,\beta}\left(i^\alpha\lambda \phi(x)\right)dx\right|\leq M_k\lambda^{-\frac{1}{k}}\log^{\frac{1}{k}}(1+\lambda),\,\,\,\lambda\geq 1,
\end{equation} where $M_k$ does not depend on $\lambda;$
  \item[(ii)] for $0<\alpha<2$ and $1<\beta< \alpha+1$ we have
\begin{equation}\label{2-1*}
\left|\int\limits_IE_{\alpha,\beta}\left(i^\alpha\lambda \phi(x)\right)dx\right|\leq M_k\lambda^{-\frac{1}{k}}(1+\lambda)^{\frac{\alpha+1-\beta}{\alpha k}},\,\,\,\lambda\geq 1,
\end{equation} where $M_k$ does not depend on $\lambda.$
\end{description}
\end{theorem}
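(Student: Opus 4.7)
The plan is to prove both cases (i) and (ii) in parallel by induction on $k$, using the pointwise decay of $E_{\alpha,\beta}$ on the ray $\arg z = \pm\pi\alpha/2$ furnished by Proposition~\ref{MLAsym}. Since $|\arg(i^\alpha\lambda\phi(x))| = \pi\alpha/2$ and $\operatorname{Re}((i^\alpha\lambda\phi(x))^{1/\alpha}) = 0$, that proposition yields
\[
|E_{\alpha,\beta}(i^\alpha\lambda\phi(x))| \leq \frac{C}{1+\lambda|\phi(x)|} \text{ in case (i)}, \qquad |E_{\alpha,\beta}(i^\alpha\lambda\phi(x))| \leq \frac{C}{(1+\lambda|\phi(x)|)^{(\beta-1)/\alpha}} \text{ in case (ii)},
\]
and in particular $|E_{\alpha,\beta}|$ is uniformly bounded by some constant $C_0$ on this ray, since $\beta>1$ makes $(1-\beta)/\alpha<0$.

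For the base case $k=1$, the hypothesis $|\phi'|\geq 1$ together with $\phi\in C^1(I)$ forces $\phi'$ to have constant sign, so $\phi$ is strictly monotone with $|1/\phi'|\leq 1$. Changing variables $y=\phi(x)$ reduces the integral to $\int_{-M}^{M} |E_{\alpha,\beta}(i^\alpha\lambda y)|\,dy$ with $M=\max(|\phi(a)|,|\phi(b)|)$. A direct elementary integration using the two pointwise bounds above produces $C\lambda^{-1}\log(1+\lambda M)$ in case (i) and $C\lambda^{-1}[(1+\lambda M)^{(\alpha+1-\beta)/\alpha}-1]$ in case (ii), which are precisely the claimed bounds at $k=1$.

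For the inductive step, $\phi^{(k)}$ has constant sign (continuous with $|\phi^{(k)}|\geq 1$), so $\phi^{(k-1)}$ is strictly monotone and has at most one zero. For a parameter $\eta>0$ to be chosen, I will select a subinterval $B\subset I$ with $|B|\leq 2\eta$ such that $|\phi^{(k-1)}(x)|\geq \eta$ on $G:=I\setminus B$: if $\phi^{(k-1)}$ has a zero $c\in I$, take $B=(c-\eta,c+\eta)\cap I$; otherwise centre $B$ at the endpoint where $|\phi^{(k-1)}|$ is minimised. On $B$, the uniform bound gives $|\int_B|\leq 2C_0\eta$. On $G$ (a union of at most two intervals), the rescaling $\tilde\phi:=\phi/\eta$ satisfies $|\tilde\phi^{(k-1)}|\geq 1$ and $i^\alpha\lambda\phi=i^\alpha(\lambda\eta)\tilde\phi$, so the inductive hypothesis applies on each component of $G$ with parameter $\lambda\eta$.

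Balancing the two contributions is the heart of the argument. In case (i) I will choose $\eta=(\lambda^{-1}\log(1+\lambda))^{1/k}$; the algebraic identity $\frac{1}{k-1}-\frac{1}{k(k-1)}=\frac{1}{k}$ then shows that both $\eta$ and $(\lambda\eta)^{-1/(k-1)}\log^{1/(k-1)}(1+\lambda\eta)$ are majorised by $\lambda^{-1/k}\log^{1/k}(1+\lambda)$. In case (ii), setting $\rho=(\alpha+1-\beta)/\alpha\in(0,1)$, I will choose $\eta=\lambda^{-(1-\rho)/k}$; the condition $\rho\geq 0$ is exactly what is needed to ensure the second contribution does not exceed $\lambda^{-(1-\rho)/k}\leq \lambda^{-1/k}(1+\lambda)^{\rho/k}$. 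The main technical obstacle is therefore purely computational: matching the exponents in the optimisation so that both cases close cleanly. A minor nuisance is ensuring $\lambda\eta\geq 1$ so that the inductive hypothesis applies; this is automatic for large $\lambda$, and for bounded $\lambda\geq 1$ the trivial estimate $|I_{\alpha,\beta}|\leq C_0|I|$ can be absorbed into the constant $M_k$.
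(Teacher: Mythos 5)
Your argument is correct and follows essentially the same route as the paper: the pointwise bound \eqref{MLAsym} on the ray $\arg z=\pm\pi\alpha/2$ (giving decay $(1+\lambda|\phi|)^{-1}$ resp.\ $(1+\lambda|\phi|)^{-(\beta-1)/\alpha}$), a monotonicity computation for the base case $k=1$, and the standard van der Corput splitting induction with an optimised window width around the near-stationary point. Your explicit rescaling $\tilde\phi=\phi/\eta$ to apply the inductive hypothesis with parameter $\lambda\eta$, and the check that $\lambda\eta\geq 1$ (absorbing bounded $\lambda$ into the constant), are welcome details that the paper's version of the induction leaves implicit.
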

\begin{proof} {\bf Proof of (i).}
First we will prove the case $k=1.$
Since $|\phi'(x)|\geq 1$ for all $x\in I,$ then $|\phi|$ is monotonic, and it can have one zero at $c\in[a,b].$ Let $c=a$ or $c=b$ and $\phi(x)\neq 0$ for all $x\in [a,b]\setminus \{c\}.$ If $0<\alpha<2$ and $\beta\geq \alpha+1,$ then by \eqref{MLAsym} we have
\begin{align*}
\int\limits_I\left|E_{\alpha,\beta}\left(i^\alpha\lambda \phi(x)\right)\right|dx \leq C\int\limits_I\frac{1}{1+\lambda|\phi(x)|}dx,\end{align*}
where $C$ is an arbitrary constant independent of $\phi$ and $\lambda.$

Let $c=a.$ Since $\phi\in C^1(I)$ and $|\phi'(x)|\geq 1,$ then
\begin{equation}\label{++}\begin{split}
\int\limits_I\left|E_{\alpha,\beta}\left(i^\alpha\lambda \phi(x)\right)\right|dx& \leq \frac{C}{\lambda}\int\limits_I\frac{1}{\phi'(x)}\frac{\lambda\phi'(x)}{1+\lambda|\phi(x)|}dx \leq \frac{C}{\lambda}\int\limits_I\frac{d(1+\lambda|\phi(x)|)}{1+\lambda|\phi(x)|}\\&\leq \frac{C}{\lambda}\log(1+\lambda|\phi(b)|)\leq \frac{M_1}{\lambda}\log(1+\lambda),\end{split}\end{equation} where $M_1$ is the arbitrary constant independent of $\lambda.$

Let $\phi(c)=0,\,c\in (a,b).$ Then
\begin{align*}
|I_{\alpha,\beta}(\lambda)|&\leq \left(\int\limits_a^c+\int\limits_c^b\right)\left|E_{\alpha,\beta}\left(i^\alpha\lambda \phi(x)\right)\right|dx.
\end{align*} Further, repeating the above calculations we have \eqref{2-1} for $k=1.$

Let $\lambda\geq 1$ and $k=2.$ We assume that \eqref{2-1} is true for $k=1$ and let $|\phi''(x)|\geq 1,$ for all $x\in I,$ we prove the estimate \eqref{2-1} for $k=2.$
Let $d\in [a,b]$ be a unique point where $|\phi'(d)|\leq |\phi'(x)|$ for all $x\in I.$ If $\phi'(d)=0,$ then we obtain $|\phi'(x)|\geq\epsilon$ on $I$ outside $(d-\epsilon, d+\epsilon).$
Further, we will write $\int\limits_a^bE_{\alpha,\beta}\left(i^\alpha\lambda \phi(x)\right)dx$ as
\begin{align*}\int\limits_a^bE_{\alpha,\beta}\left(i^\alpha\lambda \phi(x)\right)dx=\left(\int\limits_a^{c-\epsilon}+\int\limits_{c-\epsilon}^{c+\epsilon}+\int\limits_{c+\epsilon}^b\right) E_{\alpha,\beta}\left(i^\alpha\lambda \phi(x)\right)dx.\end{align*} As $|\phi'(x)|\geq\epsilon$ on $I$ outside $(d-\epsilon, d+\epsilon),$ then by the case $k=1,$ we have
\begin{align*}\left|\int\limits_a^{c-\epsilon}E_{\alpha,\beta}\left(i^\alpha\lambda \phi(x)\right)dx\right|&\leq M_2\frac{\log(1+\lambda)}{(\epsilon\lambda)},\end{align*} and \begin{align*}\left|\int\limits_{c+\epsilon}^bE_{\alpha,\beta}\left(i^\alpha\lambda \phi(x)\right)dx\right| &\leq M_2\frac{\log(1+\lambda)}{(\epsilon\lambda)}.\end{align*} As $$\left|\int\limits_{c-\epsilon}^{c+\epsilon}E_{\alpha,\beta}\left(i^\alpha\lambda \phi(x)\right)dx\right|\leq 2\epsilon,$$ we have $$\left|\int\limits_a^bE_{\alpha,\beta}\left(i^\alpha\lambda \phi(x)\right)dx\right|\leq 2M_2\frac{\log(1+\lambda)}{(\epsilon\lambda)}+2\epsilon.$$
Taking $\epsilon=\lambda^{-\frac{1}{2}}\log^{\frac{1}{2}}(1+\lambda)$ we obtain the estimate \eqref{2-1} for $k=2$, which proves the result.

Let us prove the estimate \eqref{2-1} by the induction method for $k\geq 2.$ We assume that \eqref{2-1} is true for $k\geq 2.$ And assuming $|\phi^{(k+1)}(x)|\geq 1,$ for all $x\in I,$ we prove the estimate \eqref{2-1} for $k + 1.$
Let $d\in [a,b]$ be a unique point where $|\phi^{(k)}(d)|\leq |\phi^{(k)}(x)|$ for all $x\in I.$ If $\phi^{(k)}(d)=0,$ then we obtain $|\phi^{(k)}(x)|\geq\epsilon$ on $I$ outside $(d-\epsilon, d+\epsilon).$
Further, we will write $\int\limits_a^bE_{\alpha,\beta}\left(i^\alpha\lambda \phi(x)\right)dx$ as
\begin{align*}\int\limits_a^bE_{\alpha,\beta}\left(i^\alpha\lambda \phi(x)\right)dx=\left(\int\limits_a^{d-\epsilon}+\int\limits_{d-\epsilon}^{d+\epsilon}+\int\limits_{d+\epsilon}^b\right) E_{\alpha,\beta}\left(i^\alpha\lambda \phi(x)\right)dx.\end{align*} By inductive hypothesis
\begin{align*}\left|\int\limits_a^{d-\epsilon}E_{\alpha,\beta}\left(i^\alpha\lambda \phi(x)\right)dx\right|&\leq M_k(\epsilon\lambda)^{-\frac{1}{k}}\log^{\frac{1}{k}}(1+\lambda),\end{align*} and \begin{align*}\left|\int\limits_{d+\epsilon}^bE_{\alpha,\beta}\left(i^\alpha\lambda \phi(x)\right)dx\right| &\leq M_k(\epsilon\lambda)^{-\frac{1}{k}}\log^{\frac{1}{k}}(1+\lambda).\end{align*} As $$\left|\int\limits_{d-\epsilon}^{d+\epsilon}E_{\alpha,\beta}\left(i^\alpha\lambda \phi(x)\right)dx\right|\leq 2\epsilon,$$ we have $$\left|\int\limits_a^bE_{\alpha,\beta}\left(i^\alpha\lambda \phi(x)\right)dx\right|\leq 2M_k(\epsilon\lambda)^{-\frac{1}{k}}\log^{\frac{1}{k}}(1+\lambda)+2\epsilon.$$
Taking $\epsilon=\lambda^{-\frac{1}{k+1}}\log^{\frac{1}{k+1}}(1+\lambda)$ we obtain the estimate \eqref{2-1} for $k+1$, which proves the result.\\
{\bf Proof of (ii).} Let $k=1.$
Since $|\phi'(x)|\geq 1$ for all $x\in I,$ then $|\phi|$ is monotonic, and it can have one zero at $c\in[a,b].$ Let $c=a$ or $c=b$ and $\phi(x)\neq 0$ for all $x\in [a,b]\setminus \{c\}.$ If $0<\alpha<2$ and $1<\beta< \alpha+1,$ then by \eqref{MLAsym} we have
\begin{align*}
\left|\int\limits_IE_{\alpha,\beta}\left(i^\alpha\lambda \phi(x)\right)dx\right|& \leq \int\limits_I\left|E_{\alpha,\beta}\left(i^\alpha\lambda \phi(x)\right)\right|dx\\& \leq C\int\limits_I\frac{1}{(1+\lambda|\phi(x)|)^{\frac{\beta-1}{\alpha}}}dx,\end{align*}
where $C$ is an arbitrary constant independent of $\phi$ and $\lambda.$
Let $c=a,$ then by $\phi\in C^1(I)$ and $|\phi'(x)|\geq 1,$ we have
\begin{align*}
\left|\int\limits_IE_{\alpha,\beta}\left(i^\alpha\lambda \phi(x)\right)dx\right|& \leq \frac{C}{\lambda}\int\limits_I\frac{1}{\phi'(x)}\frac{\lambda\phi'(x)}{(1+\lambda|\phi(x)|)^{\frac{\beta-1}{\alpha}}}dx \leq \frac{C}{\lambda}\int\limits_I\frac{d(1+\lambda|\phi(x)|)}{(1+\lambda|\phi(x)|)^{\frac{\beta-1}{\alpha}}}\\&\leq \frac{C}{\lambda}\left[(1+\lambda|\phi(b)|)^{\frac{\alpha+1-\beta}{\alpha}}-1\right] \leq \frac{C}{\lambda}\left[(1+\lambda|\phi(b)|)^{\frac{\alpha+1-\beta}{\alpha}}+1\right]\\&\leq \frac{M_2}{\lambda}(1+\lambda)^{\frac{\alpha+1-\beta}{\alpha}},\end{align*} where $M_2$ is the arbitrary constant independent of $\lambda.$

Let $\phi(c)=0,\,c\in (a,b).$ Then
\begin{align*}
|I_{\alpha,\beta}(\lambda)|&\leq \left(\int\limits_a^c+\int\limits_c^b\right)\left|E_{\alpha,\beta}\left(i^\alpha\lambda \phi(x)\right)\right|dx.
\end{align*} Further, repeating the above calculations we have \eqref{2-1*} for $k=1.$

Let $\lambda\geq 1,$ $0<\alpha<2$ and $1<\beta< \alpha+1,$ $k= 2.$ We assume that \eqref{2-1*} holds for $k=1.$ Assuming $|\phi''(x)|\geq 1,$ for all $x\in I,$ we prove the estimate \eqref{2-1*} for $k=2.$
Let $d\in [a,b]$ be a unique point where $|\phi'(d)|\leq |\phi'(x)|$ for all $x\in I.$ If $\phi'(d)=0,$ then we obtain $|\phi'(x)|\geq\epsilon$ on $I$ outside $(d-\epsilon, d+\epsilon).$
Further, we will write $\int\limits_a^bE_{\alpha,\beta}\left(i^\alpha\lambda \phi(x)\right)dx$ as
\begin{align*}\int\limits_a^bE_{\alpha,\beta}\left(i^\alpha\lambda \phi(x)\right)dx=\left(\int\limits_a^{d-\epsilon}+\int\limits_{d-\epsilon}^{d+\epsilon}+\int\limits_{d+\epsilon}^b\right) E_{\alpha,\beta}\left(i^\alpha\lambda \phi(x)\right)dx.\end{align*} As $|\phi'(x)|\geq\epsilon$ on $I$ outside $(d-\epsilon, d+\epsilon),$ by the case $k=1,$ we have
\begin{align*}\left|\int\limits_a^{d-\epsilon}E_{\alpha,\beta}\left(i^\alpha\lambda \phi(x)\right)dx\right|&\leq M_1\frac{(1+\lambda)^{\frac{\alpha+1-\beta}{\alpha}}}{(\epsilon\lambda)},\end{align*} and \begin{align*}\left|\int\limits_{d+\epsilon}^bE_{\alpha,\beta}\left(i^\alpha\lambda \phi(x)\right)dx\right| &\leq M_1\frac{(1+\lambda)^{\frac{\alpha+1-\beta}{\alpha}}}{(\epsilon\lambda)}.\end{align*} As $$\left|\int\limits_{d-\epsilon}^{d+\epsilon}E_{\alpha,\beta}\left(i^\alpha\lambda \phi(x)\right)dx\right|\leq 2\epsilon,$$ we have $$\left|\int\limits_a^bE_{\alpha,\beta}\left(i^\alpha\lambda \phi(x)\right)dx\right|\leq 2M_1\frac{(1+\lambda)^{\frac{\alpha+1-\beta}{\alpha}}}{(\epsilon\lambda)}+2\epsilon.$$
Taking $\epsilon=\lambda^{-\frac{1}{2}}(1+\lambda)^{\frac{1+\alpha-\beta}{2\alpha}}$ we obtain the estimate \eqref{2-1*} for $k=2$, which proves the result. The cases when $c = a$ or $c = b$ can be proved similarly.

Let us prove the estimate \eqref{2-1*} by induction method on $k\geq 2.$ We assume that \eqref{2-1*} holds for $k\geq 2.$ Assuming $|\phi^{(k+1)}(x)|\geq 1,$ for all $x\in I,$ we prove the estimate \eqref{2-1*} for $k + 1.$
Let $d\in [a,b]$ be a unique point where $|\phi^{(k)}(d)|\leq |\phi^{(k)}(x)|$ for all $x\in I.$ If $\phi^{(k)}(d)=0,$ then we obtain $|\phi^{(k)}(x)|\geq\epsilon$ on $I$ outside $(d-\epsilon, d+\epsilon).$
Further, we will write $\int\limits_a^bE_{\alpha,\beta}\left(i^\alpha\lambda \phi(x)\right)dx$ as
\begin{align*}\int\limits_a^bE_{\alpha,\beta}\left(i^\alpha\lambda \phi(x)\right)dx=\left(\int\limits_a^{d-\epsilon}+\int\limits_{d-\epsilon}^{d+\epsilon}+\int\limits_{d+\epsilon}^b\right) E_{\alpha,\beta}\left(i^\alpha\lambda \phi(x)\right)dx.\end{align*} By inductive hypothesis
\begin{align*}\left|\int\limits_a^{d-\epsilon}E_{\alpha,\beta}\left(i^\alpha\lambda \phi(x)\right)dx\right|&\leq M_k(\epsilon\lambda)^{\frac{1}{k}}(1+\lambda)^{\frac{\alpha+1-\beta}{\alpha k}},\end{align*} and \begin{align*}\left|\int\limits_{d+\epsilon}^bE_{\alpha,\beta}\left(i^\alpha\lambda \phi(x)\right)dx\right| &\leq M_k(\epsilon\lambda)^{\frac{1}{k}}(1+\lambda)^{\frac{\alpha+1-\beta}{\alpha k}}.\end{align*} As $$\left|\int\limits_{d-\epsilon}^{d+\epsilon}E_{\alpha,\beta}\left(i^\alpha\lambda \phi(x)\right)dx\right|\leq 2\epsilon,$$ we have $$\left|\int\limits_a^bE_{\alpha,\beta}\left(i^\alpha\lambda \phi(x)\right)dx\right|\leq 2M_k(\epsilon\lambda)^{\frac{1}{k}}(1+\lambda)^{\frac{\alpha+1-\beta}{\alpha k}}+2\epsilon.$$
Taking $\epsilon=\lambda^{\frac{1}{k+1}}(1+\lambda)^{\frac{\alpha+1-\beta}{\alpha (k+1)}}$ we obtain the estimate \eqref{2-1*} for $k+1$, which proves the result. The cases when $c = a$ or $c = b$ can be proved similarly.
\end{proof}
\begin{theorem}\label{theor2.1} Let $-\infty< a<b<+\infty$ and $I=[a,b]\subset \mathbb{R}.$ Let  $0<\alpha<2,\,\beta>1,$ $\phi$ is a real-valued function such that $\phi\in C^k(I),\,k\geq 1$ and let $\psi\in C^1(I).$ If
$|\phi^{(k)}(x)|\geq 1$ for all $x\in I,$ then
\begin{description}
  \item[(i)] for $0<\alpha<2$ and $\beta\geq \alpha+1$ we have
\begin{equation}\label{2-2}
\left|I_{\alpha,\beta}\left(\lambda \right)\right|\leq M_k\left[|\psi(b)|+\int\limits_I|\psi'(x)|dx\right]\lambda^{-\frac{1}{k}}\log^{\frac{1}{k}}(1+\lambda),\,\,\,\lambda\geq 1,
\end{equation} where $M_k$ does not depend on $\lambda;$
  \item[(ii)] for $0<\alpha<2$ and $1<\beta< \alpha+1$ we have
\begin{equation}\label{2-2*}
\left|I_{\alpha,\beta}\left(\lambda \right)\right|\leq M_k\left[|\psi(b)|+\int\limits_I|\psi'(x)|dx\right]\lambda^{-\frac{1}{k}}(1+\lambda)^{\frac{\alpha+1-\beta}{\alpha k}},\,\,\,\lambda\geq 1,
\end{equation} where $M_k$ does not depend on $\lambda.$
\end{description}
\end{theorem}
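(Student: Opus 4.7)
The strategy is to reduce this statement to the amplitude--free version established in Theorem~\ref{th2.1} by one integration by parts against a primitive. Introduce
$$F(x)=\int_a^{x}E_{\alpha,\beta}\!\left(i^\alpha\lambda\phi(t)\right)dt,\qquad x\in[a,b],$$
so that $F(a)=0$ and $F'(x)=E_{\alpha,\beta}(i^\alpha\lambda\phi(x))$. Since $\psi\in C^1(I)$, integration by parts gives
$$I_{\alpha,\beta}(\lambda)=F(b)\psi(b)-\int_a^{b}F(x)\psi'(x)\,dx.$$

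The key observation is that the hypothesis $|\phi^{(k)}(x)|\geq 1$ on $[a,b]$ is automatically inherited by every subinterval $[a,x]\subseteq[a,b]$. Therefore Theorem~\ref{th2.1}, applied on each such subinterval, supplies a pointwise bound on $F(x)$: in case (i),
$$|F(x)|\leq M_k\,\lambda^{-1/k}\log^{1/k}(1+\lambda),\qquad x\in[a,b],$$
and in case (ii) the analogous bound with $(1+\lambda)^{(\alpha+1-\beta)/(\alpha k)}$ in place of $\log^{1/k}(1+\lambda)$. Taking absolute values in the integration-by-parts identity and using the triangle inequality,
$$|I_{\alpha,\beta}(\lambda)|\leq \|F\|_{L^\infty[a,b]}\left[|\psi(b)|+\int_a^{b}|\psi'(x)|\,dx\right],$$
and substituting the uniform bounds on $\|F\|_{L^\infty[a,b]}$ from Theorem~\ref{th2.1} immediately yields \eqref{2-2} in case (i) and \eqref{2-2*} in case (ii).

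The one point requiring care, and the main technical obstacle, is the uniformity of the constant $M_k$ in Theorem~\ref{th2.1} as the right endpoint of the interval of integration ranges over $[a,b]$. Inspecting that proof, the only places where $\phi$ and the interval length enter the constant are through $|\phi|$ evaluated at the right endpoint (arising from $\log(1+\lambda|\phi(b)|)$ or $(1+\lambda|\phi(b)|)^{(\alpha+1-\beta)/\alpha}$) and through the measure of the exceptional intervals of length $2\epsilon$; both are majorised respectively by $\sup_{[a,b]}|\phi|$ and by $b-a$, so the same $M_k$ indeed works uniformly for every subinterval $[a,x]\subseteq[a,b]$. Once this uniformity is in place, the rest of the argument is the standard amplitude-absorbing step.
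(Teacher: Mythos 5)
Your proposal is correct and follows essentially the same route as the paper: the paper likewise writes $I_{\alpha,\beta}(\lambda)=\int_I E'(x)\psi(x)\,dx$ with $E(x)=\int_a^x E_{\alpha,\beta}(i^\alpha\lambda\phi(s))\,ds$, integrates by parts, and invokes Theorem~\ref{th2.1} on the subintervals to bound the primitive. Your added remark on the uniformity of $M_k$ over the subintervals $[a,x]$ is a point the paper leaves implicit, and it is handled correctly.
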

\begin{proof} We write \eqref{1*} as $$\int\limits_I E'(x)\psi(x)dx,$$ where $$E(x)=\int\limits_a^xE_{\alpha,\beta}\left(i^\alpha\lambda \phi(s)\right)ds.$$ Let $0<\alpha<2$ and $\beta\geq \alpha+1.$ Integrating by parts and applying the estimate of part (i) of Theorem \ref{th2.1} we obtain
\begin{equation*}|I_{\alpha,\beta}(\lambda)|\leq M_k\frac{\log(1+\lambda)}{(1+\lambda)^{1/k}}\left[|\psi(b)|+\int\limits_I|\psi'(x)|dx\right].
\end{equation*} The case (ii) can be proved similarly by applying results of part (ii) of Theorem \ref{th2.1}.
\end{proof}

Below we are interested in a particular case of the integral \eqref{1*}, when $0<\alpha<2,\,\beta=\alpha,$ that is, the integral $I_{\alpha,\alpha}(\lambda).$ For smoother $\phi$ and $\psi$ we get a better estimate than \eqref{2-1*} and \eqref{2-2*}.

\begin{theorem}\label{th1.3} Let $-\infty< a<b<+\infty$ and $I=[a,b]\subset \mathbb{R}.$ Let  $0<\alpha<2$ and let $\phi$ be a real-valued function such that $\phi\in C^k(I),\,k\geq 1.$ Let $|\phi^{(k)}(x)|\geq 1$ for all $x\in I,$ then
\begin{equation}\label{2.1}
\left|\int\limits_IE_{\alpha,\alpha}\left(i^\alpha\lambda \phi(x)\right)dx\right|\leq M_k\lambda^{-\frac{1}{k}},\,\,\,\lambda\geq 1,
\end{equation} for $k=1$ and $\phi'$ is monotonic, or $k\geq 2$. Here $M_k$ does not depend on $\lambda.$
\end{theorem}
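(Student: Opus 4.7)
The plan is to imitate the classical van der Corput proof, exploiting the Mittag--Leffler analogue of the self-reproducing identity $(e^{z})'=e^{z}$. Termwise differentiation of the defining series yields
$$\frac{d}{dz}E_{\alpha,1}(z) \;=\; \frac{1}{\alpha}\,E_{\alpha,\alpha}(z), \qquad 0<\alpha<2,$$
because $k/\Gamma(\alpha k+1) = 1/(\alpha\,\Gamma(\alpha k))$. This identity is exactly what singles out $\beta=\alpha$ in the statement and why one recovers the classical decay rate $\lambda^{-1/k}$ with no logarithmic or fractional correction, in contrast to Theorems \ref{th2.1} and \ref{theor2.1}.

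For the base case $k=1$, combining the identity with the chain rule gives the factorisation
$$E_{\alpha,\alpha}\bigl(i^{\alpha}\lambda\phi(x)\bigr) \;=\; \frac{\alpha}{i^{\alpha}\lambda\phi'(x)}\,\frac{d}{dx}\Bigl[E_{\alpha,1}\bigl(i^{\alpha}\lambda\phi(x)\bigr)\Bigr],$$
which is well defined because $|\phi'|\geq 1$. I would integrate by parts. Since $\mathrm{Re}\bigl((i^{\alpha}\lambda\phi(x))^{1/\alpha}\bigr)=0$ and $(1-\beta)/\alpha=0$ at $\beta=1$, the estimate \eqref{MLAsym} yields $|E_{\alpha,1}(i^{\alpha}\lambda\phi(x))|\leq C$ uniformly, so the boundary term is $O(1/\lambda)$. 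What remains is $\tfrac{\alpha}{i^{\alpha}\lambda}\int_{a}^{b}E_{\alpha,1}(i^{\alpha}\lambda\phi)\,\tfrac{\phi''}{(\phi')^{2}}\,dx$; monotonicity of $\phi'$ forces $\phi''$ to have constant sign, so
$$\int_{a}^{b}\frac{|\phi''(x)|}{(\phi'(x))^{2}}\,dx \;=\; \Bigl|\bigl[{-1}/{\phi'}\bigr]_{a}^{b}\Bigr|\leq 2,$$
delivering the bound $M_{1}/\lambda$.

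For $k\geq 2$ I would argue by induction, reproducing the splitting scheme of Theorem \ref{th2.1}. With $|\phi^{(k)}|\geq 1$ the function $\phi^{(k-1)}$ is strictly monotone, so its modulus attains its minimum at a unique $d\in[a,b]$, and the mean value theorem gives $|\phi^{(k-1)}|\geq\epsilon$ outside $(d-\epsilon,d+\epsilon)$. Decomposing the integral as $\int_{a}^{d-\epsilon}+\int_{d-\epsilon}^{d+\epsilon}+\int_{d+\epsilon}^{b}$, the inductive hypothesis applied to $\phi/\epsilon$ on the two outer pieces gives the bound $M_{k-1}(\epsilon\lambda)^{-1/(k-1)}$, the middle piece is trivially at most $2\epsilon$, and the choice $\epsilon=\lambda^{-1/k}$ balances the two contributions to produce $\lambda^{-1/k}$. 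When the induction first steps from $k=1$ to $k=2$, the monotonicity of $\phi'$ demanded by the base case is automatic, since $|\phi''|\geq 1$ forces $\phi''$ to keep a constant sign on the whole interval, a fortiori on the two outer pieces.

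The principal difficulty lies in the base step: producing the derivative identity cleanly and securing the uniform bound on $E_{\alpha,1}(i^{\alpha}\lambda\phi(x))$ along the entire ray. For $\phi(x)>0$ the direct application of \eqref{MLAsym} works with $\mu$ chosen just above $\pi\alpha/2$; for $\phi(x)<0$ the argument of $i^{\alpha}\lambda\phi(x)$ is $\pi\alpha/2-\pi$, which can leave the admissible sector of \eqref{MLAsym} when $\alpha$ is small, and the uniform bound must then be patched using the complementary far-field expansion $E_{\alpha,1}(z)=-\sum_{j=1}^{N}z^{-j}/\Gamma(1-\alpha j)+O(|z|^{-N-1})$ valid for $\pi\alpha<|\arg z|\leq\pi$, which also decays. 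Once the uniform bound is secured the remainder of the argument is a direct transcription of the classical scheme, and the induction closes mechanically.
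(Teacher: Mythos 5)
Your proof is correct and follows essentially the same route as the paper's: integration by parts via $\frac{d}{dz}E_{\alpha,1}(z)=\frac{1}{\alpha}E_{\alpha,\alpha}(z)$, with monotonicity of $\phi'$ controlling the total variation of $1/\phi'$ in the base case $k=1$, and then the standard splitting-and-rescaling induction with the balancing choice of $\epsilon$ for $k\geq 2$. You are in fact more careful than the paper on one point: the published argument applies \eqref{MLAsym} to $E_{\alpha,1}\left(i^\alpha\lambda\phi(x)\right)$ without observing that for $\phi(x)<0$ and small $\alpha$ the argument $\pi-\pi\alpha/2$ leaves the admissible sector, a gap your appeal to the complementary far-field expansion repairs.
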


We note that the classical van der Corput lemma \eqref{vdC} is covered by \eqref{2.1} with $\alpha=1$.
\begin{proof}
First we will prove the case $k=1.$
Let $0<\alpha<2,$ $\lambda\geq1$ and let $\phi$ has one zero $c\in[a,b].$ Let us consider the integral $$\int\limits_IE_{\alpha,\alpha}\left(i^\alpha\lambda \phi(x)\right)dx.$$ Then integrating by parts gives
\begin{align*}
\int\limits_IE_{\alpha,\alpha}\left(i^\alpha\lambda \phi(x)\right)dx&=\frac{\alpha}{i^\alpha\lambda}\int\limits_I\frac{d}{d\phi(x)}\left(E_{\alpha,1}\left(i^\alpha\lambda \phi(x)\right)\right)dx\\&=\frac{\alpha}{i^\alpha\lambda}\int\limits_I\frac{1}{\phi'(x)}\frac{d}{dx}\left(E_{\alpha,1}\left(i^\alpha\lambda \phi(x)\right)\right)dx\\&=\frac{\alpha}{i^\alpha\lambda}E_{\alpha,1}\left(i^\alpha\lambda \phi(b)\right)\frac{1}{\phi'(b)}
-\frac{\alpha}{i^\alpha\lambda}E_{\alpha,1}\left(i^\alpha\lambda \phi(a)\right)\frac{1}{\phi'(a)}\\&-\frac{\alpha}{i^\alpha\lambda}\int\limits_IE_{\alpha,1}\left(i^\alpha\lambda \phi(x)\right)\frac{d}{dx}\left(\frac{1}{\phi'(x)}\right)dx,
\end{align*} thanks to property $\frac{d}{dz}E_{\alpha,1}(z)=\frac{1}{\alpha}E_{\alpha,\alpha}(z)$ and $|\phi'(x)|\geq 1,\,x\in I.$
Then we have
\begin{equation}\label{01}\begin{split}
\left|\int\limits_IE_{\alpha,\alpha}\left(i^\alpha\lambda \phi(x)\right)dx\right|&\leq\frac{\alpha}{\lambda}\int\limits_I\left|E_{\alpha,1}\left(i^\alpha\lambda \phi(x)\right)\right|\left|\frac{d}{dx}\left(\frac{1}{\phi'(x)}\right)\right|dx\\& +\frac{\alpha}{\lambda}\left|E_{\alpha,1}\left(i^\alpha\lambda \phi(b)\right)\right|\frac{1}{|\phi'(b)|}
\\&+\frac{\alpha}{\lambda}\left|E_{\alpha,1}\left(i^\alpha\lambda \phi(a)\right)\right|\frac{1}{|\phi'(a)|}.
\end{split}\end{equation}
As $\phi'$ is monotonic and $\phi'(x)\geq 1$ for all $x\in[a,b],$ then $\frac{1}{\phi'}$ is also monotonic,
and $\frac{d}{dx}\frac{1}{\phi'(x)}$ has a fixed sign. Hence estimate \eqref{MLAsym} and $\phi(c)=0,\,c\in[a,b]$ implies
\begin{align*}
\left|\int\limits_IE_{\alpha,\alpha}\left(i^\alpha\lambda \phi(x)\right)dx\right|&\leq\frac{C\alpha}{\lambda}\int\limits_I(1+\lambda |\phi(x)|)^{\frac{1-\alpha}{\alpha}}\left|\frac{d}{dx}\left(\frac{1}{\phi'(x)}\right)\right|dx \\& +\frac{C\alpha}{\lambda}(1+\lambda |\phi(b)|)^{\frac{1-\alpha}{\alpha}}+\frac{C\alpha}{\lambda}(1+\lambda |\phi(a)|)^{\frac{1-\alpha}{\alpha}} \\&\stackrel{|\phi(x)|\geq 0}\leq\frac{C\alpha}{\lambda}\int\limits_I\left|\frac{d}{dx}\left(\frac{1}{\phi'(x)}\right)\right|dx +\frac{2C\alpha}{\lambda}\\& \leq\frac{C\alpha}{\lambda}\left|\int\limits_I\frac{d}{dx}\left(\frac{1}{\phi'(x)}\right)dx\right| +\frac{2C\alpha}{\lambda}
\\&\leq \frac{C\alpha}{\lambda}\left[2+\frac{1}{|\phi'(b)|}+\frac{1}{|\phi'(b)|}\right]\leq \frac{M_1}{\lambda},
\end{align*} thanks to fixed sign of $\frac{d}{dx}\frac{1}{\phi'(x)}.$ Here $M_1$ does not depended on $\lambda.$

We prove \eqref{2.1} for $k=2.$ Let for $d\in [a,b]$ satisfies $|\phi'(d)|\leq |\phi'(x)|\,\, \textrm{for all}\,\, x\in [a,b].$ Then $|\phi'(x)|\geq\epsilon$ on $[a,b]\setminus(d-\epsilon, d+\epsilon).$ Hence
$$\int\limits_IE_{\alpha,\alpha}\left(i^\alpha\lambda \phi(x)\right)dx=\left(\int\limits_a^{d-\epsilon}+\int\limits_{d-\epsilon}^{d+\epsilon}+ \int\limits_{d+\epsilon}^b\right)E_{\alpha,\alpha}\left(i^\alpha\lambda \phi(x)\right)dx.$$ Then $|\phi'(x)|\geq\epsilon$ on $[a,b]\setminus(d-\epsilon, d+\epsilon),$ imples
$$\left|\int\limits_a^{d-\epsilon}E_{\alpha,\alpha}\left(i^\alpha\lambda \phi(x)\right)dx\right|\leq M_1(\epsilon\lambda)^{-1},$$ and $$\left|\int\limits_{d+\epsilon}^bE_{\alpha,\alpha}\left(i^\alpha\lambda \phi(x)\right)dx\right| \leq  M_1(\epsilon\lambda)^{-1}.$$ As $$\left|\int\limits_{d-\epsilon}^{d+\epsilon}E_{\alpha,\alpha}\left(i^\alpha\lambda \phi(x)\right)dx\right|\leq 2\epsilon,$$ we have $$\left|\int\limits_a^bE_{\alpha,\alpha}\left(i^\alpha\lambda \phi(x)\right)dx\right|\leq 2M_1(\epsilon\lambda)^{-1}+2\epsilon.$$
Taking $\epsilon=\lambda^{-\frac{1}{2}}$ we obtain the estimate \eqref{2.1} for $k=2$.

We prove the case $k\geq 2$ by induction method. Let \eqref{2.1} is true for $k,$ and suppose $|\phi^{(k+1)}(x)|\geq 1,$ for all $x\in [a,b],$ we prove \eqref{2.1} for $k + 1.$

Let for $d\in [a,b]$ satisfies $|\phi^{(k)}(d)|\leq |\phi^{(k)}(x)|\,\, \textrm{for all}\,\, x\in [a,b].$ Then $|\phi^{(k)}(x)|\geq\epsilon$ on $[a,b]\setminus(c-\epsilon, c+\epsilon).$ Therefore
$$\int\limits_IE_{\alpha,\alpha}\left(i^\alpha\lambda \phi(x)\right)dx=\left(\int\limits_a^{d-\epsilon}+\int\limits_{d-\epsilon}^{d+\epsilon}+ \int\limits_{d+\epsilon}^b\right)E_{\alpha,\alpha}\left(i^\alpha\lambda \phi(x)\right)dx.$$ By inductive hypothesis
$$\left|\int\limits_a^{d-\epsilon}E_{\alpha,\alpha}\left(i^\alpha\lambda \phi(x)\right)dx\right|\leq M_k(\epsilon\lambda)^{-\frac{1}{k}},$$ and $$\left|\int\limits_{d+\epsilon}^bE_{\alpha,\alpha}\left(i^\alpha\lambda \phi(x)\right)dx\right| \leq  M_k(\epsilon\lambda)^{-\frac{1}{k}}.$$ As $$\left|\int\limits_{d-\epsilon}^{d+\epsilon}E_{\alpha,\alpha}\left(i^\alpha\lambda \phi(x)\right)dx\right|\leq 2\epsilon,$$ we have $$\left|\int\limits_a^bE_{\alpha,\alpha}\left(i^\alpha\lambda \phi(x)\right)dx\right|\leq 2M_k(\epsilon\lambda)^{-\frac{1}{k}}+2\epsilon.$$
Taking $\epsilon=\lambda^{-\frac{1}{k+1}}$ we obtain the estimate \eqref{2.1} for $k+1$.
\end{proof}
Below we show that if $\phi'$ is not monotonic, then to obtain estimate \eqref{2.1} when $k=1,$ it is necessary to increase the smoothness of function $\phi.$
\begin{theorem}\label{th3} Let $-\infty< a<b<+\infty$ and $I=[a,b]\subset \mathbb{R}.$ Let  $0<\alpha<2$ and let $\phi$ be a real-valued function such that $\phi\in C^2(I).$ Let $|\phi'(x)|\geq 1$ for all $x\in I,$ then
\begin{equation*}
\left|\int\limits_IE_{\alpha,\alpha}\left(i^\alpha\lambda \phi(x)\right)dx\right|\leq M\lambda^{-1},\,\,\,\lambda\geq 1,
\end{equation*} where $M$ does not depend on $\lambda.$
\end{theorem}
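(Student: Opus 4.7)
I plan to follow the integration-by-parts scheme used for the $k=1$ case of Theorem~\ref{th1.3}, replacing the role played there by monotonicity of $\phi'$ with the $C^2$ regularity. Using the identity $\frac{d}{dz}E_{\alpha,1}(z)=\frac{1}{\alpha}E_{\alpha,\alpha}(z)$ and the fact that $|\phi'(x)|\geq 1$ ensures $1/\phi'$ is a well-defined $C^1$ function on $I$, integration by parts yields
\begin{align*}
\int_I E_{\alpha,\alpha}(i^\alpha\lambda\phi(x))\,dx &= \frac{\alpha}{i^\alpha\lambda}\left[\frac{E_{\alpha,1}(i^\alpha\lambda\phi(b))}{\phi'(b)}-\frac{E_{\alpha,1}(i^\alpha\lambda\phi(a))}{\phi'(a)}\right]\\
&\quad - \frac{\alpha}{i^\alpha\lambda}\int_I E_{\alpha,1}(i^\alpha\lambda\phi(x))\,\frac{d}{dx}\!\left(\frac{1}{\phi'(x)}\right)dx.
\end{align*}

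As observed in the earlier proofs, $|\arg(i^\alpha\lambda\phi(x))|=\pi\alpha/2$ and $\mathrm{Re}((i^\alpha\lambda\phi(x))^{1/\alpha})=0$. Applied with $\beta=1$, so that the prefactor $(1+|z|)^{(1-\beta)/\alpha}$ reduces to $1$, estimate \eqref{MLAsym} then gives the uniform bound $|E_{\alpha,1}(i^\alpha\lambda\phi(x))|\leq C_1+C_2$ for all $x\in I$ and $\lambda\geq 1$. Together with $|\phi'(a)|,|\phi'(b)|\geq 1$, this makes the boundary piece above of order $\lambda^{-1}$.

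The key step, and the one requiring a genuinely different argument from Theorem~\ref{th1.3}, is the remainder integral: without monotonicity of $\phi'$ one can no longer invoke a constant sign of $\frac{d}{dx}(1/\phi')$ to telescope its absolute value into a boundary evaluation. Instead, the $C^2$ hypothesis lets me estimate pointwise
\[
\left|\frac{d}{dx}\frac{1}{\phi'(x)}\right| = \frac{|\phi''(x)|}{\phi'(x)^2}\leq |\phi''(x)|, \qquad x\in I,
\]
using $|\phi'|\geq 1$ once more, and then note that continuity of $\phi''$ on the compact interval $I$ makes $\int_I|\phi''(x)|\,dx$ a finite constant depending only on $\phi$. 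Combined with the uniform bound $|E_{\alpha,1}|\leq C_1+C_2$, this yields
\[
\left|\frac{\alpha}{i^\alpha\lambda}\int_I E_{\alpha,1}(i^\alpha\lambda\phi(x))\,\frac{d}{dx}\!\left(\frac{1}{\phi'(x)}\right)dx\right| \leq \frac{\alpha(C_1+C_2)}{\lambda}\int_I|\phi''(x)|\,dx = O(\lambda^{-1}).
\]

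Adding the boundary and remainder contributions produces $|I_{\alpha,\alpha}(\lambda)|\leq M\lambda^{-1}$ with $M$ depending on $\alpha$, $\phi$ (through $\|\phi''\|_{L^1(I)}$ and the values of $\phi'$ at the endpoints) and $I$, but independent of $\lambda$, as claimed. The only substantive obstacle is the observation in the third paragraph replacing the sign/telescoping trick by an honest $L^1$-bound on $\phi''/(\phi')^2$; once this is in place, the rest is a direct transposition of the $k=1$ argument from Theorem~\ref{th1.3}.
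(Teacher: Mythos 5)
Your proposal is correct and follows essentially the same route as the paper: the same integration by parts via $\frac{d}{dz}E_{\alpha,1}(z)=\frac{1}{\alpha}E_{\alpha,\alpha}(z)$, the same uniform bound on $E_{\alpha,1}$ from \eqref{MLAsym} with $\beta=1$, and the same use of $C^2$ regularity to control the remainder term, with the paper bounding $\frac{d}{dx}\left(\frac{1}{\phi'(x)}\right)$ by its $L^\infty$ norm times $(b-a)$ where you use the equivalent $L^1$ bound on $\phi''/(\phi')^2$.
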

\begin{proof} Suppose that $\phi\in C^2(I)$ and $|\phi'(x)|\geq 1$ for all $x\in I,$ then from \eqref{01} we have
\begin{equation*}\begin{split}
\left|\int\limits_IE_{\alpha,\alpha}\left(i^\alpha\lambda \phi(x)\right)dx\right|&\leq\frac{\alpha}{\lambda}\int\limits_I\left|E_{\alpha,1}\left(i^\alpha\lambda \phi(x)\right)\right|\left|\frac{d}{dx}\left(\frac{1}{\phi'(x)}\right)\right|dx\\& +\frac{\alpha}{\lambda}\left|E_{\alpha,1}\left(i^\alpha\lambda \phi(b)\right)\right|
\\&+\frac{\alpha}{\lambda}\left|E_{\alpha,1}\left(i^\alpha\lambda \phi(a)\right)\right|.
\end{split}\end{equation*} Since $\phi\in C^2(I)$ and $|\phi'(x)|\geq 1$ for all $x\in I,$ then the function $\frac{d}{dx}\left(\frac{1}{\phi'(x)}\right)$ will be continuous and bounded, and therefore by \eqref{MLAsym}  we have
\begin{equation*}\begin{split}
\left|\int\limits_IE_{\alpha,\alpha}\left(i^\alpha\lambda \phi(x)\right)dx\right|&\leq \frac{C\alpha}{\lambda}\left[\int\limits_I\left|\frac{d}{dx}\left(\frac{1}{\phi'(x)}\right)\right|dx+2\right]\\& \leq \frac{C\alpha}{\lambda}(M_1(b-a)+2)\leq \frac{M}{\lambda},
\end{split}\end{equation*} where $M_1=\left\|\frac{d}{dx}\left(\frac{1}{\phi'(x)}\right)\right\|_{L^\infty(I)},$ $C\geq\left|E_{\alpha,1}\left(z\right)\right|$ and $M$ is a constant independent of $\lambda.$
\end{proof}
\begin{theorem}\label{th1.4} Let  $0<\alpha<2$ and let $\phi$ be a real-valued function such that $\phi\in C^k(I),\,k\geq 1.$ Let $\psi\in C^1(I)$ and $|\phi^{(k)}(x)|\geq 1$ for all $x\in I,$ then
\begin{equation*}
\left|I_{\alpha,\alpha}\left(\lambda\right)\right|\leq M_k\left[|\psi(b)|+\int\limits_I|\psi'(x)|dx\right]\lambda^{-1/k},\,\,\,\lambda\geq 1,
\end{equation*} for $k=1$ and $\phi'$ is monotonic, or $k\geq 2$. Here $M_k$ does not depend on $\lambda.$
\end{theorem}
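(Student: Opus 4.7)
The plan is to reduce Theorem \ref{th1.4} to the amplitude-free estimate of Theorem \ref{th1.3} by a single integration by parts, following the same pattern that was used to pass from Theorem \ref{th2.1} to Theorem \ref{theor2.1}.

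First I would introduce the antiderivative
$$E(x) := \int\limits_a^x E_{\alpha,\alpha}\bigl(i^\alpha\lambda\phi(s)\bigr)\,ds, \qquad x\in I,$$
and note that the hypotheses $|\phi^{(k)}(x)|\geq 1$ on $I$ (and, when $k=1$, the monotonicity of $\phi'$) are inherited by every subinterval $[a,x]\subset I$. Applying Theorem \ref{th1.3} to each such subinterval therefore yields the uniform bound
$$\sup_{x\in I}|E(x)|\leq M_k\lambda^{-1/k},\qquad \lambda\geq 1,$$
with the same constant $M_k$ as in Theorem \ref{th1.3}.

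Second, since $\psi\in C^1(I)$, I integrate by parts:
$$I_{\alpha,\alpha}(\lambda)=\int\limits_I E'(x)\psi(x)\,dx = E(b)\psi(b)-E(a)\psi(a)-\int\limits_I E(x)\psi'(x)\,dx.$$
Because $E(a)=0$, taking absolute values and inserting the uniform bound on $E$ gives
$$|I_{\alpha,\alpha}(\lambda)|\leq M_k\lambda^{-1/k}|\psi(b)|+M_k\lambda^{-1/k}\int\limits_I|\psi'(x)|\,dx,$$
which is exactly the claimed estimate.

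The one point that requires care is the first step: one must verify that the constant $M_k$ produced in Theorem \ref{th1.3} is genuinely uniform in the right endpoint of integration. Inspecting that proof, the constants come from boundary terms of the form $|\phi^{(k)}|^{-1}\leq 1$, from the total variation of $1/\phi'$ (which on a subinterval only decreases), and from $\epsilon$-sized cutoffs that are absorbed after optimization in $\lambda$, so replacing $b$ by any $x\in[a,b]$ only improves or preserves the bound. Once this uniformity is settled, the rest is a direct integration by parts, so the main obstacle is really just bookkeeping rather than any new analytic input.
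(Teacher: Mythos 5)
Your proposal is correct and follows exactly the route the paper intends: the paper's proof of Theorem \ref{th1.4} is the one-line remark that it is proved like Theorem \ref{theor2.1}, i.e. by writing $I_{\alpha,\alpha}(\lambda)=\int_I E'(x)\psi(x)\,dx$ with $E(x)=\int_a^x E_{\alpha,\alpha}(i^\alpha\lambda\phi(s))\,ds$, integrating by parts, and invoking the amplitude-free bound of Theorem \ref{th1.3} uniformly over the subintervals $[a,x]$. Your extra check that the constant $M_k$ from Theorem \ref{th1.3} is uniform in the right endpoint is a worthwhile detail the paper leaves implicit, but it is not a departure from the paper's argument.
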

Theorem \ref{th1.4} can be proved similarly as Theorem \ref{th2.1}.

The case of $\alpha=1$ corresponds to the classical van der Corput lemma \eqref{vdC}.

Also, for $k=1,$ Theorem \ref{th1.4} holds if we replace the condition that $\phi'$ is monotonic by $\phi\in C^2(I).$

\section{Applications}
\label{SEC:apps}
In this section we give some applications of van der Corput lemmas involving Mittag-Leffler function.
\subsection{Applications to the fractional evolution equations}
\subsubsection{Decay estimates for the time-fractional Klein-Gordon equation}
Consider the time-fractional Klein-Gordon equation
\begin{equation}\label{Shr1}
{D}^\alpha_{0+,t}u(t,x)+i^\alpha u_{xx}(t,x)-i^\alpha\mu u(t,x)=0,\,\,t>0,\,x\in\mathbb{R},
\end{equation}
with initial data
\begin{equation}\label{Shr2}
I^{2-\alpha}_{0+,t}u(0,x)=0,\,x\in \mathbb{R},\end{equation} \begin{equation}\label{Shr2*}\partial_tI^{2-\alpha}_{0+,t}u(0,x)=\psi(x),\,x\in \mathbb{R},
\end{equation}
where $\mu>0,$ and, $$I^{\alpha}_{0+,t}u(t,x)=\frac{1}{\Gamma(\alpha)}\int\limits_0^t \left(t-s\right)^{\alpha-1}u(s,x)ds$$ and $${D}^{\alpha}_{0+,t}u(t,x)=\frac{1}{\Gamma(2-\alpha)}\partial^2_t\int\limits_0^t \left(t-s\right)^{1-\alpha}u(s,x)ds$$ is the Riemann-Liouville fractional integral and derivative of order $1<\alpha\leq 2.$

If $\alpha = 2,$ then from \eqref{Shr1} we obtain a classical Klein-Gordon equation.

Applying the Fourier transform $\mathcal{F}$ to problem \eqref{Shr1}-\eqref{Shr2*} with respect to space variable $x$ yields
\begin{equation}\label{Shr3}{D}_{0+,t}^{\alpha } \hat{u}
\left( t, \xi\right) - i^\alpha(\xi^{2}+\mu) \hat{u}\left( t, \xi \right) = 0,\, t >0,\,\xi\in \mathbb{R},\end{equation} \begin{equation}\label{Shr4*}I^{2-\alpha}_{0+,t}\hat{u} \left(0,\xi\right)=0,\,\,\xi\in \mathbb{R},\end{equation} \begin{equation}\label{Shr4}\partial_tI^{2-\alpha}_{0+,t}\hat{u}_t \left(0,\xi\right) =\hat{\psi}(\xi),\,\,\xi\in \mathbb{R},\end{equation} due to $\mathcal{F}\left\{u_{xx}(t,x)\right\}=-\xi^{2}\hat{u}(t,\xi).$ The general solution of equation
\eqref{Shr3} can be represented as
\begin{equation*}\hat{u} \left(
t, \xi\right) = C_1(\xi) t^{\alpha-1}E_{\alpha, \alpha} \left( {i^\alpha(\xi^{2}+\mu) t^{\alpha}  }\right)+C_2(\xi) t^{\alpha-2}E_{\alpha, \alpha-1} \left( {i^\alpha(\xi^{2}+\mu) t^{\alpha}  }\right),\end{equation*} where $C_1(\xi)$ and $C_2(\xi)$ are unknown coefficients. Then by initial conditions \eqref{Shr4*}-\eqref{Shr4} we have
\begin{equation*}\hat{u} \left(
t, \xi\right) = \hat{\psi}(\xi) t^{\alpha-1}E_{\alpha, \alpha} \left( {i^\alpha(\xi^{2}+\mu) t^{\alpha}  }\right).\end{equation*}
By applying the inverse Fourier transform $\mathcal{F}^{-1}$ we have
\begin{equation}\label{Shr5}
u(t,x)=\int\limits_{\mathbb{R}}e^{ix\xi}t^{\alpha-1}E_{\alpha,\alpha}\left(i^\alpha(\xi^2+\mu) t^\alpha\right)\hat{\psi}(\xi)d\xi,
\end{equation} where $\hat{\psi}(\xi)=\frac{1}{\pi}\int\limits_{\mathbb{R}}e^{-iy\xi}\psi(y)dy.$

Suppose that $\psi\in L^1(\mathbb{R})$ and $\hat{\psi}\in L^1(\mathbb{R}).$ As $\inf\limits_{\xi\in \mathbb{R}}(\xi^2+\mu)=\mu>0,$ then using Theorem \ref{th1} (ii) we obtain the dispersive estimate
$$\|u(t,\cdot)\|_{L^\infty(\mathbb{R})}\leq Ct^{\alpha-1}(1+t^\alpha)^{\frac{1-\alpha}{\alpha}}\|\hat{\psi}\|_{L^1(\mathbb{R})}\leq Ct^{\alpha-1}(1+t)^{1-\alpha}\|\hat{\psi}\|_{L^1(\mathbb{R})},\,t> 0.$$
\subsubsection{Decay estimates for the time-fractional Schr\"{o}dinger equation}
Consider the time-fractional Schr\"{o}dinger equation
\begin{equation}\label{Schr1}
\mathcal{D}^\alpha_{0+,t}u(t,x)+i^\alpha u_{xx}(t,x)-i^\alpha\mu u(t,x)=t^{-\gamma}\psi(x),\,\,t>0,\,x\in\mathbb{R},
\end{equation}
with Cauchy data
\begin{equation}\label{Schr2}
u(0,x)=0,\,x\in \mathbb{R},\end{equation}
where $\mu>0,$ $0\leq\gamma<\alpha,$ and, $$\mathcal{D}^{\alpha}_{0+,t}u(t,x)=\frac{1}{\Gamma(1-\alpha)}\int\limits_0^t \left(t-s\right)^{1-\alpha}\partial_su(s,x)ds$$ is the Caputo fractional derivative of order $0<\alpha\leq 1.$

When $\alpha = 1,$ we have the classical Schr\"{o}dinger equation.

Applying the Fourier transform $\mathcal{F}$ to problem \eqref{Schr1}-\eqref{Schr2} with respect to space variable $x$ yields
\begin{equation}\label{Schr3}{D}_{0+,t}^{\alpha } \hat{u}
\left( t, \xi\right) - i^\alpha(\xi^{2}+\mu) \hat{u}\left( t, \xi \right) = t^{-\gamma}\hat{\psi}(\xi),\, t >0,\,\xi\in \mathbb{R},\end{equation} \begin{equation}\label{Schr4}\hat{u} \left(0,\xi\right)=0,\,\,\xi\in \mathbb{R}.\end{equation} The solution of problem
\eqref{Schr3}-\eqref{Schr4} can be represented as
\begin{equation*}\hat{u} \left(
t, \xi\right) = \int\limits_0^t (t-s)^{\alpha-1}E_{\alpha, \alpha} \left( {i^\alpha(\xi^{2}+\mu) (t-s)^{\alpha}  }\right)s^{-\gamma}\hat{\psi}(\xi)ds.\end{equation*} Calculating the above integral we obtain
\begin{equation*}\hat{u} \left(
t, \xi\right) = \Gamma(1-\gamma)\hat{\psi}(\xi)t^{\alpha-\gamma}E_{\alpha, \alpha-\gamma+1} \left( {i^\alpha(\xi^{2}+\mu) t^{\alpha}  }\right).\end{equation*}
By applying the inverse Fourier transform $\mathcal{F}^{-1}$ we have
\begin{equation}\label{Shr5}
u(t,x)=\Gamma(1-\gamma)\int\limits_{\mathbb{R}}e^{ix\xi}t^{\alpha-\gamma}E_{\alpha, \alpha-\gamma+1} \left( {i^\alpha(\xi^{2}+\mu) t^{\alpha}  }\right)\hat{\psi}(\xi)d\xi,
\end{equation} where $\hat{\psi}(\xi)=\frac{1}{\pi}\int\limits_{\mathbb{R}}e^{-iy\xi}\psi(y)dy.$

Suppose that $\psi\in L^1(\mathbb{R})$ and $\hat{\psi}\in L^1(\mathbb{R}).$ As $\inf\limits_{\xi\in \mathbb{R}}(\xi^2+\mu)=\mu>0$ and $\alpha-\gamma+1<\alpha+1,$ then using Theorem \ref{th1} (ii) we obtain the estimate
$$\|u(t,\cdot)\|_{L^\infty(\mathbb{R})}\leq Ct^{\alpha-\gamma}(1+t^\alpha)^{\frac{\gamma-\alpha}{\alpha}}\|\hat{\psi}\|_{L^1(\mathbb{R})} \leq Ct^{\alpha-\gamma}(1+t)^{{\gamma-\alpha}}\|\hat{\psi}\|_{L^1(\mathbb{R})},\,t> 0.$$

\subsection{Generalised Riemann-Lebesgue lemma}
The Riemann-Lebesgue lemma is the classical result of harmonic and asymptotic analysis. The simplest form of the Riemann-Lebesgue lemma states that for a function $f\in L^1(\mathbb{R})$ we have
\begin{equation}\label{RiemLeb1}
\lim\limits_{k\rightarrow\infty}\int\limits_\mathbb{R}e^{ikx}f(x)dx=0.
\end{equation}
If $f\in C^1([a,b]),$ then
\begin{equation}\label{RiemLeb2}
\int\limits_a^be^{ikx}f(x)dx=\mathcal{O}\left(\frac{1}{k}\right),\,\,\,\textrm{at}\,\,\,k\rightarrow\infty.
\end{equation}
As a generalisation of Fourier integral we consider the following integral
\begin{equation}\label{RiemLeb3}\int\limits_DE_{\alpha,\beta}\left(i^\alpha k x\right)f(x)dx,\end{equation}
where $0<\alpha\leq 2,\,\beta\geq1,$ and $D\subseteq \mathbb{R}.$

Suppose that $D=\mathbb{R}$ and $f\in L^1(\mathbb{R}),$ then Theorem \ref{th2} yields
$$\lim\limits_{k\rightarrow\infty}\int\limits_\mathbb{R}E_{\alpha,1}\left(i^\alpha k x\right)f(x)dx=0.$$
For $\alpha=1$ this reduces to \eqref{RiemLeb1}.

If $f\in C^1([a,b]),$ then from the van der Corput lemmas we obtain
\begin{itemize}
  \item for $0<\alpha<2$ and $\beta\geq \alpha+1,$ by Theorem \ref{theor2.1} (i) we have
  \begin{equation*}
  \int\limits_a^bE_{\alpha,\beta}\left(i^\alpha k x\right)f(x)dx=\mathcal{O}\left(\frac{\log(1+k)}{k}\right);
  \end{equation*}
  \item for $0<\alpha<2$ and $1<\beta< \alpha+1,$ by Theorem \ref{theor2.1} (ii) we have
  \begin{equation*}
  \int\limits_a^bE_{\alpha,\beta}\left(i^\alpha k x\right)f(x)dx=\mathcal{O}\left(k^{-1}(1+k)^{\frac{\alpha+1-\beta}{\alpha}}\right);
  \end{equation*}
  \item for $0<\alpha<2$ and $\beta=\alpha,$ by Theorem \ref{th1.4} we have
  \begin{equation*}
  \int\limits_a^bE_{\alpha,\alpha}\left(i^\alpha k x\right)f(x)dx=\mathcal{O}\left(k^{-1}\right).
  \end{equation*}
\end{itemize}
\section*{Conclusion}
The main goal of the paper was to study van der Corput lemmas for the integral defined by
\begin{equation*}I_{\alpha,\beta}(\lambda)=\int\limits_a^bE_{\alpha,\beta}\left(i^\alpha\lambda \phi(x)\right)\psi(x)dx,\end{equation*}
with $0<\alpha\leq 2,\,\beta\geq 1.$

For $I_{\alpha,\beta}(\lambda),$ van der Corput type lemmas were obtained, for the following cases of parameters $\alpha$ and $\beta$:
\begin{itemize}
  \item $0<\alpha<2$ and $\beta\geq \alpha+1;$
  \item $0<\alpha<2$ and $1<\beta<\alpha+1;$
  \item $\alpha=2$ and $\beta\geq1;$
  \item $0<\alpha<2$ and $\beta=\alpha.$
\end{itemize}

As an immediate application of the obtained results, time-estimates of the solutions of time-fractional Klein-Gordon and Schr\"{o}dinger equations and generalisations of the Riemann-Lebesgue lemma were considered.

\end{document}